\title{Flabby and injective objects in toposes}
\author{Ingo Blechschmidt}
\address{Università di Verona \\
Department of Computer Science \\
Strada le Grazie 15 \\
37134 Verona, Italy}
\email{iblech@speicherleck.de}
\theoremstyle{definition}
\newtheorem{defn}{Definition}[section]
\theoremstyle{plain}
\newtheorem{prop}[defn]{Proposition}
\newtheorem{cor}[defn]{Corollary}
\newtheorem{lemma}[defn]{Lemma}
\newtheorem{thm}[defn]{Theorem}
\newtheorem{scholium}[defn]{Scholium}
\theoremstyle{remark}
\newtheorem{rem}[defn]{Remark}
\newcommand{\xra}[1]{\xrightarrow{#1}}
\newcommand{\E}{\mathcal{E}}
\newcommand{\F}{\mathcal{F}}
\newcommand{\NN}{\mathbb{N}}
\newcommand{\ZZ}{\mathbb{Z}}
\renewcommand{\P}{\mathcal{P}}
\renewcommand{\O}{\mathcal{O}}
\newcommand{\defeq}{\vcentcolon=}
\newcommand{\op}{\mathrm{op}}
\DeclareMathOperator{\Hom}{Hom}
\DeclareMathOperator{\Mod}{Mod}
\DeclareMathOperator{\Sh}{Sh}
\newcommand{\Set}{\mathrm{Set}}
\newcommand{\Eff}{\mathrm{Ef{}f}}
\renewcommand{\_}{\mathpunct{.}\,}
\newcommand{\effective}{ef{}fective\xspace}
\begin{document}

\begin{abstract}
  We introduce a general notion of \emph{flabby objects} in elementary toposes
  and study their basic properties. In the special case of localic toposes, this
  notion reduces to the common notion of flabby sheaves, yielding a
  site-independent characterization of flabby sheaves. Continuing a line of
  research started by Roswitha Harting, we use flabby objects to
  show that an internal notion of injective objects coincides with the
  corresponding external notion, in stark contrast with the situation for
  projective objects. We show as an application that higher direct images can
  be understood as internal cohomology, and we study flabby objects in the
  \effective topos.
\end{abstract}

\maketitle
\thispagestyle{empty}

\noindent
As is nowadays well-established, any topos supports an \emph{internal language}
which can be used to reason about the objects and morphisms of the topos in a
naive element-based language, allowing us to pretend that the objects are plain
sets (or types) and that the morphisms are plain maps between those sets
(\cite[Chapter~6]{borceux:handbook3}, \cite[Section~1.3]{caramello:ttt},
\cite[Chapter~14]{goldblatt:topoi},
\cite[Chapter~VI]{moerdijk-maclane:sheaves-logic}). The internal language is
sound with respect to intuitionistic reasoning, whereby any intuitionistic
theorem holds in any topos.

The internal language of a sheaf topos enables \emph{relativization by
internalization}. For instance, by interpreting the
proposition \begin{quote}``in any short exact sequence of modules, if the two
outer ones are finitely generated then so is the middle one''\end{quote} of
intuitionistic commutative algebra internally to the topos of sheaves over a
space~$X$, we obtain the geometric analogue \begin{quote}``in any short exact
sequence of sheaves of modules over~$X$, if the two outer ones are of finite
type then so is the middle one''.\end{quote}
This way of deducing geometric theorems provides conceptual clarity, reduces
technical overhead and justifies certain kinds of ``fast and loose reasoning''
typical of informal algebraic geometry. As soon as we go beyond the fragment of
geometric sequents and consider more involved first-order or even higher-order
statements, also significant improvements in proof length and proof complexity
can be gained. For instance, Grothendieck's generic freeness lemma admits a
short and simple proof in this framework, while previously-published proofs
proceed in a somewhat involved series of reduction steps and require a fair
amount of prerequisites in commutative
algebra~\cite{blechschmidt:phd,blechschmidt:generic-freeness}.

The practicality of this approach hinges on the extent to which the dictionary
between internal and external notions has been worked out. For instance, the
simple example displayed above hinges on the dictionary entry stating that a
sheaf of modules is of finite type if and only if it looks like a
finitely generated module from the internal point of view. The motivation for
this note was to find internal characterizations of flabby sheaves and of higher
direct images, and the resulting entries are laid out in
Section~\ref{sect:flabby-objects} and in
Section~\ref{sect:higher-direct-images}: A sheaf is flabby if and only if, from
the internal point of view, it is a flabby set, a notion introduced in
Section~\ref{sect:flabby-sets} below; and higher direct images look like sheaf
cohomology from the internal point of view.

As a byproduct, we demonstrate how the notion of flabby sets is a useful
organizing principle in the study of injective objects. We employ flabby sets
to give a new proof of Roswitha Harting's results that injectivity of sheaves is
a local notion~\cite{harting:remark} and that a sheaf is injective if and only
if it is injective from the internal point of
view~\cite{harting:locally-injective}, which she stated (in slightly different
language) for sheaves of abelian groups. We use the opportunity to correct a
small mistake of hers, namely claiming that the analogous results for sheaves of
modules would be false.

When employing the internal language of a topos, we are always referring to Mike Shulman's
extension of the usual internal language, his \emph{stack
semantics}~\cite{shulman:stack-semantics}. This extension allows to internalize
unbounded quantification, which among other things is required to express the
internal injectivity condition and the internal construction of sheaf
cohomology.

A further motivation for this note was our desire to seek a constructive
account of sheaf cohomology. Sheaf cohomology is commonly defined using
injective resolutions, which can fail to exist in the absence of the axiom of
choice~\cite{blass:inj-proj-axc}, but flabby resolutions
can also be used in their stead, making them the obvious candidate for a
constructively sensible replacement of the usual definition. However, we show
in Section~\ref{sect:in-eff} and in Section~\ref{sect:conclusion} that flabby
resolutions present their own challenges, and in summary we failed to reach our
goal. To the best of our knowledge, the problem of giving a constructive account
of sheaf cohomology is still open.

In view of almost 80 years of sheaf cohomology, this state of affairs is
slightly embarrassing, challenging the call that ``once [a] subject is better
understood, we can hope to refine its definitions and proofs so as to
avoid [the law of excluded middle]''~\cite[Section~3.4]{hott}.

A constructive account of sheaf cohomology would be highly desirable, not only
out of a philosophical desire to obtain a deeper understanding of the
foundations of sheaf cohomology, but also to: use the tools of sheaf cohomology
in the internal setting of toposes, thereby extending their applicability by
relativization by internalization; and to carry out \emph{integrated
developments} of algorithms for computing sheaf cohomology, where we would
extract algorithms together with termination and correctness proofs from a
hypothetical constructive account.


\textbf{Acknowledgments.} We are grateful to Thorsten Altenkirch, Simon Henry
and Maria Emilia Maietti for insightful discussions and pointers to prior work,
to Jürgen Jost, Marc Nieper-Wißkirchen and Peter Schuster for valuable
guidance, and to Daniel Albert and Caterina Cozzi for their careful readings of
earlier drafts. Most of the work for this note was carried out at the University of
Augsburg and the Max Planck Institute for Mathematics in the Sciences in
Leipzig.

\section{Flabby sheaves}

A sheaf~$F$ on a topological space or a locale~$X$ is \emph{flabby} (flasque) if and only
if all restriction maps~$F(X) \to F(U)$ are surjective. The following
properties of flabby sheaves render them fundamental to the theory of sheaf
cohomology:
\begin{enumerate}
\item Let~$(U_i)_i$ be an open covering of~$X$.
A sheaf~$F$ on~$X$ is flabby if and only if all of its restrictions~$F|_{U_i}$
are flabby as sheaves on~$U_i$.
\item Let~$f : X \to Y$ be a continuous map. If~$F$ is a flabby sheaf on~$X$,
then~$f_*(F)$ is a flabby sheaf on~$Y$.
\item[(3)] Let~$0 \to F \to G \to H \to 0$ be a short exact sequence of sheaves of
modules.
\begin{enumerate}
\item If~$F$ is flabby, then this sequence is also exact as a sequence of
presheaves.
\item If~$F$ and~$H$ are flabby, then so is~$G$.
\item If~$F$ and~$G$ are flabby, then so is~$H$.
\end{enumerate}
\item[(4a)] Any sheaf embeds into a flabby sheaf.
\item[(4b)] Any sheaf of modules embeds into a flabby sheaf of modules.
\end{enumerate}

Since we want to develop an analogous theory for flabby objects in elementary
toposes, it is worthwhile to analyze the logical and set-theoretic commitments
which are required to establish these properties. The standard proofs of
properties~(1),~(3a),~(3b) and~(3c) require Zorn's lemma to construct maximal
extensions of given sections. The standard proof of property~(4b) requires the
law of excluded middle, to ensure that the Godement construction actually
yields a flabby sheaf. Properties~(2) and~(4a) can be verified purely
intuitionistically.

There is an alternative definition of flabbiness, to be introduced below, which
is equivalent to the usual one in presence of Zorn's lemma and which
requires different commitments: For the alternative definition,
properties~(1),~(3b) and~(4a) can be verified purely intuitionistically.
There is a substitute for property~(3a) which can be verified purely
intuitionistically. We do not know whether property~(4b) can be established
purely intuitionistically, but we give a rudimentary analysis in
Section~\ref{sect:conclusion}.

Both definitions can be generalized to yield notions of flabby objects in
elementary toposes; but for toposes which are not localic, the two resulting
notions will differ, and only the one obtained from the alternative definition
is stable under pullback and
can be characterized in the internal language. We therefore adopt in this paper the
alternative one as the official definition.

\begin{defn}\label{defn:flabby-sheaf}
A sheaf~$F$ on a topological space (or locale)~$X$ is \emph{flabby}
if and only if for all opens~$U$ and all sections~$s \in F(U)$, there is an
open covering~$X = \bigcup_i U_i$ such that, for all~$i$, the section~$s$ can
be extended to a section on~$U \cup U_i$.\end{defn}

If~$F$ is a flabby sheaf in the traditional sense, then~$F$ is obviously also
flabby in the sense of Definition~\ref{defn:flabby-sheaf} -- singleton
coverings will do. Conversely, let~$F$ be a flabby sheaf in the sense of
Definition~\ref{defn:flabby-sheaf}. Let~$s \in F(U)$ be a local section. Zorn's
lemma implies that there is a maximal extension~$s' \in F(U')$. By assumption,
there is an open covering~$X = \bigcup_i U_i$ such that, for all~$i$, the
section~$s'$ can be extended to~$U' \cup U_i$. Since~$s'$ is maximal, $U' \cup
U_i = U'$ for all~$i$. Therefore~$X = \bigcup_i U_i \subseteq U'$; hence~$s'$ is a
global section, as desired.

We remark that unlike the traditional definition of flabbiness,
Definition~\ref{defn:flabby-sheaf} exhibits flabbiness as a manifestly local
notion.

\section{Flabby sets}\label{sect:flabby-sets}

We intend this section to be applied in the internal language of an elementary
topos; we will speak about sets and maps between sets, but intend our arguments
to be applied to objects and morphisms in toposes. We will therefore be careful
to reason purely intuitionistically. We
adopt the terminology of~\cite{kock:partial-maps} regarding subterminals and
subsingletons: A subset~$K \subseteq X$ is \emph{subterminal} if and only if any given
elements are equal ($\forall x,y \in K\_ x = y$), and it is a
\emph{subsingleton} if and only if there is an element~$x \in X$ such that~$K
\subseteq \{ x \}$. Any subsingleton is trivially subterminal, but the converse
might fail.

\begin{defn}A set~$X$ is \emph{flabby} if and only if any subterminal subset
of~$X$ is a subsingleton, that is, if and only if for any subset~$K \subseteq
X$ such that~$\forall x,y \in K\_ x = y$, there exists an element~$x \in X$
such that~$K \subseteq \{ x \}$.
\end{defn}

In the presence of the law of excluded middle, a set is flabby if and only if
it is inhabited. This characterization is a constructive taboo:

\begin{prop}\label{prop:taboo}
If any inhabited set is flabby, then the law of excluded middle
holds.
\end{prop}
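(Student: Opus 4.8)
The plan is to reduce the law of excluded middle to its pointwise form and then to decide an arbitrary proposition~$P$ by feeding a carefully chosen subterminal subset into the flabbiness hypothesis. So I fix an arbitrary proposition~$P$ and aim to establish~$P \lor \neg P$; since~$P$ is arbitrary, this yields full excluded middle.

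The central device will be the inhabited set $X \defeq P \sqcup \neg P \sqcup \{\star\}$, the coproduct of the subterminals~$P$ and~$\neg P$ (regarded as subobjects of the terminal object) together with one auxiliary point~$\star$. The point~$\star$ guarantees that~$X$ is inhabited, so that the hypothesis applies and~$X$ is flabby. Inside~$X$ I consider the subset $K \defeq P \sqcup \neg P$. The first step is to verify that~$K$ is subterminal: given $x, y \in K$, disjointness of the coproduct lets me locate each of them in a summand; if both lie in the~$P$-summand (or both in the~$\neg P$-summand) they are equal because~$P$ and~$\neg P$ are subterminal, while if they lie in different summands then~$P$ and~$\neg P$ would hold simultaneously, which is absurd, so equality holds vacuously.

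Flabbiness then supplies an element $a \in X$ with $K \subseteq \{a\}$, and the crux is to read off a decision of~$P$ from the summand containing~$a$. If~$a$ lies in the~$P$-summand I obtain a proof of~$P$; if it lies in the~$\neg P$-summand I obtain~$\neg P$; and if~$a$ is the auxiliary point~$\star$, then $K \subseteq \{\star\}$ forces~$K$ to be empty, since no element of $P \sqcup \neg P$ can equal~$\star$ by disjointness. But emptiness of~$K$ reads as $\neg(P \lor \neg P)$, contradicting the intuitionistic theorem $\neg\neg(P \lor \neg P)$. Hence the last case is vacuous, and in the remaining cases we conclude $P \lor \neg P$.

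The main obstacle, and the reason a naive two-element gadget is insufficient, is precisely this last step: a symmetric construction in~$\{0,1\}$ only extracts the double-negated shadow $\neg P \lor \neg\neg P$, whereas genuine excluded middle requires the witness to carry positive information. The coproduct $P \sqcup \neg P \sqcup \{\star\}$ achieves this because disjointness of summands simultaneously rules out the auxiliary point as a possible witness and converts membership of~$a$ in the~$P$- or~$\neg P$-summand into an actual proof. The remaining care is to check that every ingredient---coproduct disjointness, subterminality of~$P$ and~$\neg P$, and the final case analysis---is intuitionistically valid, so that the argument may be interpreted internally to any elementary topos via the stack semantics.
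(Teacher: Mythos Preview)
Your proof is correct, but it takes a more elaborate route than the paper's. The paper uses the two-element gadget $X \defeq \{0\} \cup \{1 \,|\, \varphi\} \subseteq \{0,1\}$ with the subterminal $K \defeq \{1 \,|\, \varphi\}$; flabbiness yields an element~$x$ with $K \subseteq \{x\}$, and decidable equality in~$\{0,1\}$ gives the split $x = 1$ (hence $1 \in X$, hence~$\varphi$) or $x \neq 1$ (hence $1 \notin K$, hence~$\neg\varphi$). Your ternary coproduct $P \sqcup \neg P \sqcup \{\star\}$ achieves the same effect via decidability of coproduct summands rather than decidable equality in a finite set; both are intuitionistically unproblematic, so both arguments are sound. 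The practical difference is that the paper's construction is shorter and avoids the auxiliary appeal to $\neg\neg(P \lor \neg P)$ to dispatch the third case.

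Your closing remark that ``a naive two-element gadget is insufficient'' is not quite right: the paper's two-element construction is asymmetric ($0$ is unconditionally present, $1$ only conditionally), and this asymmetry is exactly what lets the case $x = 1$ produce a genuine witness of~$\varphi$ rather than merely~$\neg\neg\varphi$. What would indeed fail is a \emph{symmetric} two-element attempt such as $X = \{0,1\}$ with $K = \{0 \,|\, \varphi\} \cup \{1 \,|\, \neg\varphi\}$, which only yields weak excluded middle. So your diagnosis of the obstacle is correct in spirit but misidentifies the remedy: one does not need a third summand, only the asymmetry already present in the paper's~$X$.
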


\begin{proof}Let~$\varphi$ be a truth value. The set~$X \defeq \{ 0 \}
\cup \{ 1 \,|\, \varphi \} \subseteq \{ 0,1 \}$ is inhabited by~$0$ and
contains~$1$ if and only if~$\varphi$ holds. Let~$K$ be the subterminal~$\{ 1 \,|\, \varphi
\} \subseteq X$. Flabbiness of~$X$ implies that there exists an element~$x \in
X$ such that~$K \subseteq \{x\}$. We have~$x \neq 1$ or~$x = 1$. The first case
entails~$\neg\varphi$. The second case entails~$1 \in X$, so~$\varphi$.
\end{proof}

Let~$\P_{\leq 1}(X)$ be the set of subterminals of~$X$.

\begin{prop}A set~$X$ is flabby if and only if the canonical map~$X \to
\P_{\leq 1}(X)$ which sends an element~$x$ to the singleton set~$\{x\}$ is
final.
\end{prop}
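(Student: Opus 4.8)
The plan is to prove both directions directly from the definitions, so first I need to recall what "final" means for the canonical map $X \to \P_{\leq 1}(X)$, $x \mapsto \{x\}$. A map $f : X \to Y$ is final precisely when every element of $Y$ is "covered" by $X$ in the appropriate sense — for a map into a poset-like object such as $\P_{\leq 1}(X)$, finality should mean that for every subterminal $K \in \P_{\leq 1}(X)$, there is (locally, or in the internal sense) an element $x \in X$ with $K = \{x\}$, or perhaps $K \subseteq \{x\}$. So I would begin by unwinding the intended meaning of finality here; the key is that $\P_{\leq 1}(X)$ carries a natural partial order (inclusion), and finality of the map amounts to saying every subterminal is in the image up to the ordering, i.e.\ every $K \in \P_{\leq 1}(X)$ satisfies $K \subseteq \{x\}$ for some $x \in X$ — which is exactly the flabbiness condition.

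Let me think about this more carefully. The plan is to write $\P_{\leq 1}(X)$ concretely as the collection of subsets $K \subseteq X$ with $\forall x,y \in K\_ x = y$, ordered by inclusion. The singleton map sends $x$ to the subterminal $\{x\}$, which is a maximal element among subterminals (any subterminal containing it equals it). For the forward direction, I would assume $X$ is flabby and show the map is final: given any subterminal $K$, flabbiness produces an $x \in X$ with $K \subseteq \{x\}$, exhibiting $K$ below an element of the image, which is what finality requires. For the reverse direction, I would assume finality and recover flabbiness by feeding an arbitrary subterminal $K$ into the finality condition to obtain the desired $x$ with $K \subseteq \{x\}$.

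The main subtlety, and the step I expect to be the real obstacle, is pinning down the precise definition of "final" in this intuitionistic, internal setting, since finality is usually phrased for functors between categories or maps inducing equivalences on slices or colimits, and here we have a map into the poset $\P_{\leq 1}(X)$ viewed as a category. I would make explicit that $\P_{\leq 1}(X)$ is to be treated as a poset (a thin category), that finality of a map into a poset $P$ means that for every $p \in P$ the "comma" condition holds — namely the set $\{x \in X \mid p \le f(x)\}$ is inhabited and appropriately connected — and then verify that for a poset target this reduces cleanly to the existence statement in the flabbiness condition. Once the correct reading of finality is fixed, both implications should follow by direct substitution, so the crux is genuinely the translation of the categorical notion of finality into the concrete membership condition rather than any deep argument.

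Once that translation is in hand, the proof is essentially a tautology: flabbiness says exactly that each fibre of the comma category is inhabited, and connectedness is automatic because $\{x\}$ is a maximal subterminal, so any two witnesses $x, x'$ with $K \subseteq \{x\}$ and $K \subseteq \{x'\}$ are comparable through their upper bounds in $\P_{\leq 1}(X)$. I would therefore spend the bulk of the written proof carefully justifying the finality criterion for posets and then close each direction in a sentence or two.
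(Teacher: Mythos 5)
Your core reading is the right one and matches the paper's intent: ``final'' here is the order-theoretic notion that the image of $x \mapsto \{x\}$ is cofinal in $\P_{\leq 1}(X)$ under inclusion, so finality says precisely that every subterminal $K$ satisfies $K \subseteq \{x\}$ for some $x \in X$ --- which is word-for-word the flabbiness condition. With that reading both directions are immediate substitutions, and indeed the paper's entire proof is ``By definition.''

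However, your fallback to the categorical (comma-category) notion of finality contains a genuine error, and it is worth seeing why that reading must be rejected rather than verified. Viewing the set $X$ as a discrete category, the comma category over a subterminal $K$ has as objects the elements $x \in X$ with $K \subseteq \{x\}$, and its morphisms come from the \emph{domain} $X$ --- so there are only identities. A discrete category is connected exactly when it has precisely one object, so categorical finality would demand a \emph{unique} $x$ with $K \subseteq \{x\}$; taking $K = \emptyset$, every element of $X$ qualifies, so this forces $X$ to have exactly one element. For instance $\Omega$ is flabby, yet the comma category over $\emptyset$ contains the objects $\top$ and $\bot$ with no morphisms between them, so the canonical map is not final in the categorical sense. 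Your claim that any two witnesses $x, x'$ are ``comparable through their upper bounds in $\P_{\leq 1}(X)$'' conflates zigzags in the comma category with comparability in the codomain poset: upper bounds in $\P_{\leq 1}(X)$ supply no morphisms in the comma category, so connectedness is not automatic --- it genuinely fails. The fix is simply to drop the connectedness clause: take ``final'' to mean cofinality of the image (inhabitedness of $\{x \in X \mid K \subseteq \{x\}\}$ for each $K$), after which the proposition is the tautology you and the paper both observe.
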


\begin{proof}By definition.\end{proof}

The set~$\P_{\leq 1}(X)$ of subterminals of $X$ can be interpreted as the set
of \emph{partially-defined elements} of $X$. In this view, the empty subset is
the maximally undefined element and a singleton is a maximally defined element.
A set is flabby if and only if any of its partially-defined elements can be
refined to an honest element.

\begin{rem}Although we will see in Section~\ref{sect:in-eff} that there is some
relation between flabby sets and~$\neg\neg$-separated sets, neither notion
encompasses the other. The set~$\Omega$ is flabby, but might fail to
be~$\neg\neg$-separated; the set~$\ZZ$ is~$\neg\neg$-separated, even discrete,
but might fail to be flabby. This can abstractly be seen by adapting the proof of
Proposition~\ref{prop:taboo}. An explicit model in which~$\ZZ$ is not flabby
can be obtained by picking any topological space~$T$ such that~$H^1(T,
\underline{\ZZ}) \neq 0$. Then the constant sheaf~$\underline{\ZZ}$ is not
flabby and hence, by Proposition~\ref{prop:flabby-sheaves-objects} below, not a
flabby set from the internal point of view of~$\Sh(T)$.
\end{rem}

\begin{defn}\begin{enumerate}
\item A set~$I$ is \emph{injective} if and only if, for any injection~$i
: A \to B$, any map~$f : A \to I$ can be extended to a map~$B \to I$.
\item An~$R$-module~$I$ is \emph{injective} if and only if, for any linear
injection~$i : A \to B$ between~$R$-modules, any linear map~$f : A \to I$ can
be extended to a linear map~$B \to I$, as in the diagram below.
\end{enumerate}
\[ \xymatrix{
  A \ar@{^{(}->}[r]\ar[d] & B \ar@{-->}@/^/[ld] \\
  I
} \]
\end{defn}

In the presence of the law of excluded middle, a set is injective if and only
if it is inhabited. In the presence of the axiom of choice, an abelian group is
injective (as a~$\ZZ$-module) if and only if it is divisible. Injective sets
and modules have been intensively studied in the context of foundations
before~\cite{blass:inj-proj-axc,harting:locally-injective,kenney:injective-choice,aczel-berg-granstroem-schuster:injective};
the following properties are well-known:

\begin{prop}\label{prop:basics-injective}
\begin{enumerate}
\item Any set embeds into an injective set.
\item Any injective module is also injective as a set.
\item Assuming the axiom of choice, any module embeds into an
injective module.
\end{enumerate}\end{prop}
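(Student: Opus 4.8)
The plan is to prove the three parts of Proposition~\ref{prop:basics-injective} separately, since each requires a different amount of choice. For part~(1), the key observation is that the powerset functor provides a canonical injective hull: given any set~$X$, I would consider the map $X \to \P(X)$ sending $x \mapsto \{x\}$, which is injective because singletons determine their elements. The target $\P(X)$ is injective purely intuitionistically: given an injection $i : A \hookrightarrow B$ and a map $f : A \to \P(X)$, I would define an extension $g : B \to \P(X)$ by the ``possibility'' formula $g(b) = \{\, x \in X \mid \exists a \in A\_ i(a) = b \wedge x \in f(a)\,\}$. I would then check that $g \circ i = f$, which uses only that $i$ is injective (so that $i(a) = b$ pins down $a$ up to equality), and this verification is constructively valid.

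For part~(2), I would argue that an injective module is injective as a set by reducing set-level extension problems to module-level ones via a free-module construction. Given an injection of sets $i : A \hookrightarrow B$ and a set map $f : A \to I$ into an injective module~$I$, the idea is to pass to the free $R$-modules $R^{(A)} \to R^{(B)}$ induced by~$i$. The induced map on free modules is again a linear injection (this is where one must be slightly careful constructively, but the freeness of the construction makes it clean), and $f$ extends to a linear map $R^{(A)} \to I$ by the universal property of the free module. Injectivity of~$I$ as a module then yields a linear extension $R^{(B)} \to I$, and precomposing with the unit $B \to R^{(B)}$ and using $i$'s compatibility recovers a set-level extension of~$f$ along~$i$. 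The main thing to verify is that the diagram of sets commutes after this detour through free modules.

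For part~(3), which is the part genuinely requiring the axiom of choice, the plan is to invoke Baer's criterion and a transfinite Zorn-style argument, exactly as in the classical theory. Given a module~$M$, I would embed it into an injective module by first embedding $M$ into some cogenerating injective (for instance via the character-module or double-dual construction over a suitable injective cogenerator) and then extending maximally. Concretely, one forms a large enough product or uses the map $M \to \prod \Hom(\dots)$ into a product of copies of an injective cogenerator; Baer's criterion reduces injectivity-checking to extensions along ideals $\aaa \subseteq R$, and Zorn's lemma produces the required maximal extensions at each stage.

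The hard part, and the reason the three statements are stratified by their logical strength, is precisely the choice-theoretic content of part~(3): the embedding of an arbitrary module into an injective one is a genuine consequence of the axiom of choice and cannot be expected to hold intuitionistically, in sharp contrast to parts~(1) and~(2), whose proofs via the powerset and free-module constructions are fully constructive. I would therefore expect parts~(1) and~(2) to be short and formula-driven, while part~(3) leans on the full classical machinery of injective hulls and Baer's criterion and should simply be cited to the standard references rather than reproved in detail.
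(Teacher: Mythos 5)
Your proposal is correct and follows essentially the same route as the paper: part~(1) via the singleton embedding $x \mapsto \{x\}$ into the (constructively injective) powerset $\P(X)$, part~(2) via the free-module adjunction (the paper phrases this as the forgetful functor having a left exact left adjoint, which is exactly the mono-preservation point you flag), and part~(3) via the classical divisible-group/Baer/Zorn machinery transferred to $R$-modules. The only cosmetic difference is that the paper packages the transfer in part~(3) explicitly through the functor $A \mapsto \Hom(R,A)$ with its left exact left adjoint and monic unit, whereas you reach the same endpoint through the injective-cogenerator/character-module formulation.
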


\begin{proof}\begin{enumerate}
\item One can check that, for instance, the full powerset~$\P(X)$ and the set of
subterminals~$\P_{\leq 1}(X)$ are each injective.
\item The forgetful functor from modules to sets possesses a left exact left
adjoint. More explicitly, if~$i : A \to B$ is an injective map between sets and
if~$f : A \to I$ is an arbitrary map, then the induced map~$R\langle A \rangle
\to R\langle B \rangle$ between free modules is also injective, the given
map~$f$ lifts to a linear map~$R\langle A \rangle \to I$, and an~$R$-linear
extension~$R\langle B \rangle \to I$ induces an extension~$B \to I$ of~$f$.
\item One verifies that any abelian group embeds into a divisible
abelian group. By Baer's criterion (which requires the axiom of choice), divisible abelian groups are injective.
The result for modules over arbitrary rings then follows purely
formally, since the functor~$A \mapsto \Hom(R,A)$ from abelian groups
to~$R$-modules has a left exact left adjoint with monic unit. \qedhere
\end{enumerate}\end{proof}

\begin{prop}\label{prop:injective-flabby}
Any injective set is flabby.\end{prop}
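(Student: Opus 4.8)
The plan is to reduce flabbiness to a single instance of the extension property defining injectivity. First I would take an arbitrary subterminal subset~$K \subseteq I$ and consider the unique map~$K \to 1$ into a singleton set~$1 = \{\star\}$. The key observation is that this map is an injection \emph{precisely because}~$K$ is subterminal: any two elements of~$K$ necessarily map to the same point of~$1$, so injectivity of~$K \to 1$ is literally the condition~$\forall x,y \in K\_ x = y$. Regarding the inclusion~$K \hookrightarrow I$ as a map~$f : K \to I$, injectivity of~$I$ then furnishes an extension~$g : 1 \to I$ of~$f$ along~$K \to 1$. Such a map~$g$ is the same datum as an element~$x \defeq g(\star) \in I$, and the condition that~$g$ extends~$f$ says exactly that~$k = x$ for every~$k \in K$; hence~$K \subseteq \{x\}$, which is what flabbiness demands.

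There is no substantive obstacle beyond correctly setting up this extension problem; the only step requiring genuine care is the verification, intuitionistically, that a map into a singleton is an injection if and only if its domain is subterminal. I regard this as the conceptual crux, since it explains why subterminality---rather than the a priori stronger subsingleton condition---is the hypothesis matching the injectivity extension property: solving the extension along~$K \to 1$ is, by unwinding the definitions, nothing other than exhibiting the subterminal~$K$ as a subsingleton. The argument also runs parallel to the verification in Proposition~\ref{prop:basics-injective}(1) that~$\P_{\leq 1}(X)$ is injective, which is no coincidence, as~$\P_{\leq 1}$ is precisely the construction freely adjoining refinements of partially-defined elements.
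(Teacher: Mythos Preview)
Your proof is correct and follows exactly the same approach as the paper: extend the inclusion~$K \hookrightarrow I$ along the injection~$K \to 1$ and read off the required element as the value of the extension at~$\star$. The paper states this more tersely, without your explicit discussion of why~$K \to 1$ is injective or the concluding commentary, but the mathematical content is identical.
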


\begin{proof}Let~$I$ be an injective set. Let~$K \subseteq I$ be a subterminal.
The inclusion~$f : K \to I$ extends along the injection~$K \to 1 = \{\star\}$
to a map~$1 \to I$. The unique image~$x$ of that map has the property that~$K
\subseteq \{x\}$.\end{proof}

\begin{cor}\label{cor:enough-flabby-sets}
Any set embeds into a flabby set.\end{cor}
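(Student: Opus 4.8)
The plan is to deduce this corollary by chaining together the two immediately preceding results. The statement \emph{any set embeds into a flabby set} should follow at once from Proposition~\ref{prop:basics-injective}(1), which guarantees that any set embeds into an injective set, combined with Proposition~\ref{prop:injective-flabby}, which says every injective set is flabby. So the one-line argument would be: given a set~$X$, embed it into an injective set~$I$ (for instance~$\P(X)$ or~$\P_{\leq 1}(X)$, as noted in the proof of Proposition~\ref{prop:basics-injective}(1)), and observe that~$I$ is then flabby by Proposition~\ref{prop:injective-flabby}.

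First I would simply invoke Proposition~\ref{prop:basics-injective}(1) to obtain an embedding~$X \hookrightarrow I$ into an injective set~$I$. Then I would apply Proposition~\ref{prop:injective-flabby} to conclude that~$I$ is flabby, which completes the argument. If I wanted the proof to be self-contained rather than a bare citation, I could instead exhibit the embedding explicitly: the singleton map~$x \mapsto \{x\}$ realizes~$X$ as a subobject of~$\P_{\leq 1}(X)$, and I would verify directly that~$\P_{\leq 1}(X)$ is flabby. That direct verification is clean because a subterminal subset~$K \subseteq \P_{\leq 1}(X)$ can be refined to the honest element~$\bigcup K \in \P_{\leq 1}(X)$ (the union of a subterminal family of subterminals is again subterminal), which gives the required refinement.

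Since the whole point is to reason intuitionistically, I would take care that the embedding and the flabbiness witness are both constructive. Both ingredients already satisfy this: the embeddings~$X \hookrightarrow \P(X)$ and~$X \hookrightarrow \P_{\leq 1}(X)$ are defined without any appeal to excluded middle or choice, and the proof of Proposition~\ref{prop:injective-flabby} is manifestly intuitionistic, using only the injectivity lift along~$K \to 1$. Consequently the corollary holds purely intuitionistically and will hold internally in any elementary topos, which is the form needed for the later applications to sheaves.

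I do not anticipate a genuine obstacle here, as the result is an immediate formal consequence of the two preceding propositions. The only point requiring the slightest care is ensuring that whichever injective set one names is intuitionistically acceptable; but both candidate powerset constructions are fine, so the corollary follows without further effort.
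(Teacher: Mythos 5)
Your main argument is exactly the paper's proof, which reads verbatim ``Immediate by Proposition~\ref{prop:basics-injective}(1) and Proposition~\ref{prop:injective-flabby}.'' Your optional self-contained variant is also correct: for a subterminal~$K \subseteq \P_{\leq 1}(X)$ the union~$\bigcup K$ is indeed again subterminal and refines~$K$, so~$\P_{\leq 1}(X)$ is flabby by a direct intuitionistic check, bypassing injectivity altogether.
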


\begin{proof}Immediate by Proposition~\ref{prop:basics-injective}(1) and
Proposition~\ref{prop:injective-flabby}.\end{proof}

A further corollary of Proposition~\ref{prop:injective-flabby} is that the
statement ``any inhabited set is injective'' is a constructive taboo: If any
inhabited set is injective, then any inhabited set is flabby, thus the law of
excluded middle follows by Proposition~\ref{prop:taboo}.

\begin{prop}Any singleton set is flabby. The cartesian product of flabby sets
is flabby.\end{prop}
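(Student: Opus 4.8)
The plan is to handle the two assertions separately, reading the second as closure under binary (hence, by induction, finite) products, with the singleton furnishing the empty product.

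The singleton $X = \{\star\}$ is disposed of at once: any subterminal $K \subseteq X$ already satisfies $K \subseteq \{\star\}$, so $\star$ exhibits $K$ as a subsingleton. For a binary product $X \times Y$ of flabby sets, my approach is to transport a given subterminal $K \subseteq X \times Y$ through the two projections. First I would record that the images $\pi_X(K)$ and $\pi_Y(K)$ are themselves subterminal: two representatives in, say, $\pi_X(K)$ come from elements of $K$, which coincide by subterminality of $K$, forcing their first coordinates to agree. Flabbiness of the factors then supplies $x \in X$ and $y \in Y$ with $\pi_X(K) \subseteq \{x\}$ and $\pi_Y(K) \subseteq \{y\}$, and the pair $(x,y)$ should serve as the required subsingleton witness, since any $(a,b) \in K$ has $a = x$ and $b = y$. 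A routine induction lifts this to all finite products.

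I do not anticipate a real obstacle in the finite case: instantiating the two existentials and pairing them is legitimate for a finite index set, and the reasoning stays intuitionistic throughout. The point worth flagging is rather why the statement ought not be read for arbitrary products. Taking $K = \emptyset$ in the definition already shows that a flabby set is inhabited, a subsingleton witness for the empty set being precisely an element of the set. Hence an arbitrary product of flabby sets being even inhabited would amount to choosing one element from each factor simultaneously, i.e.\ the axiom of choice. The restriction to finite products is therefore intrinsic rather than an artefact of the proof, and I would state the proposition accordingly.
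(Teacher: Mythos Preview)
Your argument is correct and spells out what the paper leaves implicit; its own proof consists of the single word \emph{Immediate}, and the projection-and-pairing construction you give is exactly the obvious verification behind that word.

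One small sharpening of your closing remark: closure under arbitrary products would entail only a choice principle for families of \emph{flabby} sets, not the full axiom of choice. Since not every inhabited set need be flabby (Proposition~\ref{prop:taboo}), this is a genuinely weaker principle, so ``i.e.\ the axiom of choice'' overstates matters. Your underlying point---that the obvious proof for an infinite index set requires a simultaneous selection of witnesses $x_i$, so the restriction to finite products is intrinsic rather than an artefact---is well taken.
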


\begin{proof}Immediate.\end{proof}

Subsets of flabby sets are in general not flabby, as else any set would be
flabby in view of Corollary~\ref{cor:enough-flabby-sets}.

\begin{prop}\label{prop:hom-flabby}
\begin{enumerate}
\item Let~$I$ be an injective set. Let~$T$ be an arbitrary set. Then the
set~$I^T$ of maps from~$T$ to~$I$ is flabby.
\item Let~$I$ be an injective~$R$-module. Let~$T$ be an arbitrary~$R$-module. Then the
set~$\Hom_R(T,I)$ of linear maps from~$T$ to~$I$ is flabby.
\end{enumerate}\end{prop}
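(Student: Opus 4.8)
The plan is to reduce the flabbiness of the hom-object to the injectivity of~$I$: given a subterminal subset~$K$ of the hom-object, I will manufacture a single element~$g$ of which~$K$ is a subsingleton, $K \subseteq \{g\}$, by assembling a partially-defined map on the ``support'' of~$K$ and extending it along an injection using injectivity of~$I$. The crucial point throughout is that subterminality of~$K$ forces any two members of~$K$ to agree pointwise, so that the partial map is well-defined without any case distinction on whether~$K$ is inhabited.

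For part~(1), let~$K \subseteq I^T$ be subterminal and set~$A \defeq \{ t \in T \mid \exists f \in K \} \subseteq T$. I would define~$f_0 : A \to I$ by sending~$t$ to the common value~$f(t)$ shared by all~$f \in K$; concretely, $f_0(t)$ is the unique~$y \in I$ with~$\exists f \in K\_ f(t) = y$, where existence holds because~$t \in A$ and uniqueness holds because any~$f, f' \in K$ coincide. Injectivity of~$I$ then extends~$f_0$ along the inclusion~$A \hookrightarrow T$ to a map~$g : T \to I$. To see~$K \subseteq \{g\}$, note that any~$f \in K$ witnesses~$A = T$, so~$g$ and~$f_0$ agree everywhere and~$f_0 = f$ by construction; hence~$g = f$.

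For part~(2), with~$K \subseteq \Hom_R(T,I)$ subterminal, the same idea applies, but now~$A$ must be a submodule of~$T$, and here~$\{ t \in T \mid \exists f \in K \}$ will not do, since it need not contain~$0$. Instead I would take~$A$ to be the submodule generated by this set, which is~$\{ t \in T \mid (\exists f \in K) \vee t = 0 \}$. On~$A$ I would define the linear map~$f_0$ by the same ``common value'' prescription, now reading~$f_0(t)$ as the unique~$y$ with~$(\exists f \in K\_ f(t) = y) \vee (t = 0 \wedge y = 0)$, the two clauses being compatible because every~$f \in K$ is linear and so sends~$0$ to~$0$. Extending~$f_0$ along~$A \hookrightarrow T$ by injectivity of~$I$ as an~$R$-module yields the desired~$g$, and the verification that~$K \subseteq \{g\}$ proceeds exactly as before.

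I expect the main obstacle to be the module case, specifically the bookkeeping needed to carry out the construction purely intuitionistically: one must confirm that~$A$ is genuinely a submodule and that the prescription for~$f_0$ is both single-valued and~$R$-linear, all without deciding whether~$K$ is inhabited. Each of these reduces to the fact that~$K$ is subterminal together with the intuitionistic distributivity law~$(\varphi \vee P) \wedge (\varphi \vee Q) \vdash \varphi \vee (P \wedge Q)$ applied with~$\varphi \defeq (\exists f \in K)$, so that no appeal to excluded middle or to choice is required.
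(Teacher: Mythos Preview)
Your proof is correct and is essentially identical to the paper's: your set~$A = \{t \in T \mid \exists f \in K\}$ is exactly the paper's~$T' = \{s \in T \mid \text{$K$ is inhabited}\}$, and in the module case your submodule~$\{t \in T \mid (\exists f \in K) \vee t = 0\}$ and your case-split definition of~$f_0$ likewise match the paper's construction verbatim. If anything, your write-up is more explicit than the paper's about why the partial map is well-defined and linear in the intuitionistic setting.
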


\begin{proof}We first cover the case of sets. Let~$K \subseteq I^T$ be a
subterminal. We consider the injectivity diagram
\[ \xymatrix{
  T' \ar@{^{(}->}[r]\ar[d] & T \ar@{-->}@/^/[ld] \\
  I
} \]
where~$T'$ is the subset~$\{ s \in T \,|\, \text{$K$ is inhabited} \} \subseteq T$ and the
solid vertical map sends~$s \in T'$ to~$g(s)$, where~$g$ is an arbitrary element
of~$K$. This association is well-defined. Since~$I$ is injective, a dotted lift
as indicated exists. If~$K$ is inhabited, this lift is an element of~$K$.

The same kind of argument applies to the case of modules. If~$K$ is a
subterminal of~$\Hom_R(T,I)$, we define~$T'$ to be the submodule
$\{ s \in T \,|\, \text{$s = 0$ or $K$ is inhabited} \}$ and consider the
analogous injectivity diagram, where the solid vertical map~$f : T' \to I$ is now
defined by cases: Let~$s \in T'$. If~$s = 0$, then we set~$f(s) = 0$; if~$K$ is
inhabited, then we set~$f(s) \defeq g(s)$, where~$g$ is an arbitrary element
of~$K$. This association is again well-defined, and a dotted lift yields the
desired element of~$\Hom_R(T,I)$.
\end{proof}

Proposition~\ref{prop:hom-flabby} can be used to give an alternative proof of
Proposition~\ref{prop:injective-flabby} and to generalize
Proposition~\ref{prop:injective-flabby} to modules: If~$I$ is an injective set,
then the set~$I^1 \cong I$ is flabby. If~$I$ is an injective module, then the
set~$\Hom_R(R,I) \cong I$ is flabby.

\begin{lemma}\label{lemma:set-of-extensions-flabby}
\begin{enumerate}
\item Let~$I$ be an injective set. Let~$i : A \to B$ be an injection.
Let~$f : A \to I$ be an arbitrary map. Then the set of extensions of~$f$ to~$B$
is flabby.
\item Let~$I$ be an injective~$R$-module. Let~$i : A \to B$ be a linear injection.
Let~$f : A \to I$ be an arbitrary linear map. Then the set of linear extensions of~$f$ to~$B$
is flabby.
\end{enumerate}
\end{lemma}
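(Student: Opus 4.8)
The plan is to mimic the proof of Proposition~\ref{prop:hom-flabby}, of which this lemma is a generalization: part~(1) of the proposition is recovered by taking~$A$ to be the empty set and~$f$ the empty map into~$I$, and part~(2) by taking~$A$ to be the zero module and~$f$ the zero map, so that in either case the extensions of~$f$ to~$B \defeq T$ are exactly the (linear) maps~$T \to I$. Accordingly I would treat both parts in parallel, spelling out the set-theoretic case first. For part~(1), let~$K$ be a subterminal subset of the set~$E$ of extensions of~$f$, where an extension means a map~$e : B \to I$ with~$e \circ i = f$; the goal is to produce a single~$e \in E$ that simultaneously bounds~$K$.

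To this end I would introduce the subset
\[ S \defeq \{ s \in B \,|\, (\exists a \in A\_ s = i(a)) \vee (\text{$K$ is inhabited}) \} \subseteq B \]
together with the map~$\phi : S \to I$ given by~$\phi(i(a)) \defeq f(a)$ on the image of~$i$ (well-defined since~$i$ is injective) and by~$\phi(s) \defeq g(s)$ whenever~$K$ is inhabited, for~$g$ an arbitrary element of~$K$. Because any two elements of~$K$ coincide ($K$ being subterminal) and because every~$g \in K$ already extends~$f$, the two prescriptions agree on their overlap, so~$\phi$ is well-defined. Injectivity of~$I$ then supplies a lift~$e : B \to I$ of~$\phi$ along the inclusion~$S \hookrightarrow B$.

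It remains to verify the two desired properties of~$e$. Since~$i(A) \subseteq S$ and~$e$ restricts to~$\phi = f$ there, the map~$e$ is an extension of~$f$, hence~$e \in E$. For the bound~$K \subseteq \{e\}$, I would let~$g \in K$ be arbitrary; then~$K$ is inhabited, so~$S = B$ and~$\phi = g$ on all of~$B$, forcing~$e = \phi = g$. Thus every element of~$K$ equals~$e$, which is precisely the flabbiness condition.

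For part~(2) I would run the same argument with~$S$ the submodule~$\{ s \in B \,|\, s \in i(A) \vee (\text{$K$ is inhabited}) \}$ and with~$\phi$ the analogous map defined by cases, extending~$\phi$ by injectivity of the module~$I$. The one point demanding care---and the main obstacle---is the constructive verification that~$\phi$ is~$R$-linear, since one cannot decide whether~$K$ is inhabited. I would handle this by observing that for~$s, s' \in S$ the conjunction of their two defining disjunctions is intuitionistically equivalent to~$(s, s' \in i(A)) \vee (\text{$K$ is inhabited})$, and then checking additivity and compatibility with scalars separately in each disjunct, where it follows at once from linearity of~$f$ respectively of~$g$. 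The same reduction also justifies well-definedness of~$\phi$, and the two properties of the resulting extension follow exactly as in part~(1).
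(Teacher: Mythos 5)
Your proof is correct and takes essentially the same approach as the paper's: given a subterminal $K$ of the set of extensions, define a map by cases on a subobject of $B$ that becomes all of $B$ precisely when $K$ is inhabited, and invoke injectivity of $I$ to extend it. The only difference is cosmetic: in part (2) you describe the relevant submodule directly as $\{\, s \in B \mid s \in i(A) \vee \text{$K$ is inhabited} \,\}$, whereas the paper writes it as $i[A] + B'$ with $B' = \{\, t \in B \mid t = 0 \text{ or $K$ is inhabited} \,\}$; these are the same submodule, and your direct by-cases definition (with the intuitionistic reduction of the conjunction of disjunctions) neatly sidesteps the well-definedness bookkeeping attached to the paper's decomposition $s = i(a) + t$.
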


\begin{proof}For the first claim, we set~$X \defeq \{ \bar{f} \in I^B \,|\, \bar{f} \circ i =
f \}$. Let~$K \subseteq X$ be a subterminal. We consider the injectivity diagram
\[ \xymatrix{
  i[A] \cup B' \ar@{^{(}->}[r]\ar[d]_g & B \ar@{-->}@/^/[ld] \\
  I
} \]
where~$B'$ is the set~$\{ s \in B \,|\, \text{$K$ is inhabited} \}$ and the solid
vertical arrow~$g$ is defined in the following way: Let~$s \in i[A] \cup B'$.
If~$s \in i[A]$, we set~$g(s) \defeq f(a)$, where~$a \in A$ is an element such
that~$s = i(a)$. If~$s \in B'$, we set~$g(s) \defeq \bar{f}(s)$,
where~$\bar{f}$ is any element of~$K$. These prescriptions determine a well-defined
map.

Since~$I$ is injective, there exists a dotted map rendering the diagram
commutative. This map is an element of~$X$. If~$K$ is inhabited,
this map is an element of~$K$.

The proof of the second claim is similar. We
set~$X \defeq \{ \bar{f} \in \Hom_R(B,I) \,|\, \bar{f} \circ i =
f \}$. Let~$K \subseteq X$ be a subterminal. We consider the injectivity diagram
\[ \xymatrix{
  i[A] + B' \ar@{^{(}->}[r]\ar[d]_g & B \ar@{-->}@/^/[ld] \\
  I
} \]
where~$B'$ is the submodule~$\{ t \in B \,|\, \text{$t = 0$ or $K$ is
inhabited} \} \subseteq B$ and the solid vertical arrow~$g$ is defined in the following
way: Let~$s \in i[A] + B'$. Then~$s = i(a) + t$ for an element~$a \in A$ and an
element~$t \in B'$. Since~$t \in B'$, $t = 0$ or~$K$ is inhabited. If~$t = 0$,
we set~$g(s) \defeq f(a)$. If~$K$ is inhabited, we set~$g(s) \defeq f(a) +
\bar{f}(s)$, where~$\bar{f}$ is any element of~$K$. These prescriptions
determine a well-defined map.

Since~$I$ is injective, there exists a dotted map rendering the diagram
commutative. This map is an element of~$X$. Furthermore, if~$K$ is inhabited,
then this map is an element of~$K$.
\end{proof}

\begin{prop}\label{prop:set-of-preimages-flabby}
Let~$0 \to M' \xra{i} M \xra{p} M'' \to 0$ be a short exact
sequence of modules. Let~$s \in M''$. If~$M'$ is flabby, then the set of
preimages of~$s$ under~$p$ is flabby.
\end{prop}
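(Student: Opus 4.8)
The plan is to exploit the torsor structure of the preimage set, transferring flabbiness from~$M'$. Write~$P \defeq \{ m \in M \,|\, p(m) = s \}$ for the set of preimages and let~$K \subseteq P$ be a subterminal; we have to produce an element~$m \in P$ with~$K \subseteq \{m\}$. Since~$p$ is surjective, there is a preimage~$m_0 \in M$ of~$s$. As our goal is a mere existence statement, and hence a proposition, we may eliminate this existential and reason with a fixed such~$m_0$; this is the only point where the surjectivity of~$p$, and with it the inhabitedness of~$P$, enters, and it is precisely what allows us to handle the case of an empty~$K$.

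Having fixed~$m_0$, I would transport~$K$ across the bijection~$M' \cong P$, $x \mapsto m_0 + i(x)$, whose inverse is well-defined by exactness at~$M$ together with the injectivity of~$i$. Concretely, set~$K' \defeq \{ x \in M' \,|\, m_0 + i(x) \in K \} \subseteq M'$. A short computation shows that~$K'$ is again subterminal: if~$x, y \in K'$, then~$m_0 + i(x)$ and~$m_0 + i(y)$ both lie in~$K$ and hence coincide, and cancelling~$m_0$ and applying injectivity of~$i$ gives~$x = y$.

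Now flabbiness of~$M'$ supplies an element~$x_0 \in M'$ with~$K' \subseteq \{x_0\}$, and I claim that~$m \defeq m_0 + i(x_0)$ is as desired. Clearly~$p(m) = s$, so~$m \in P$. For the containment~$K \subseteq \{m\}$, let~$k \in K$; by exactness at~$M$ the difference~$k - m_0$ lies in~$i[M']$, so~$k = m_0 + i(x)$ for some~$x \in M'$, which then lies in~$K'$ and hence equals~$x_0$, whence~$k = m$. The construction is essentially forced, so I expect no serious obstacle; the one point demanding care is the constructive bookkeeping around the two existentials — the choice of~$m_0$ and the preimage~$x$ of~$k - m_0$ — both of which are admissible precisely because they are invoked only in the course of proving propositional conclusions.
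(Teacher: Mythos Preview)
Your proof is correct and follows essentially the same route as the paper: fix a preimage~$m_0$ by surjectivity of~$p$, transport the subterminal~$K$ to~$M'$ via the torsor bijection (your~$K'$ is exactly the paper's~$i^{-1}[K - m_0]$), apply flabbiness of~$M'$, and translate back. The only cosmetic difference is in the final verification, where the paper argues that the inhabited subterminal~$i^{-1}[K - m_0] \subseteq \{v\}$ must contain~$v$, while you pick a preimage~$x$ of~$k - m_0$ directly and conclude~$x = x_0$; both are equally valid intuitionistically.
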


\begin{proof}Let~$X \defeq \{ u \in M \,|\, p(u) = s
\}$. Let~$K \subseteq X$ be a subterminal. Since~$p$ is surjective, there is an
element~$u_0 \in X$. The translated set~$K - u_0 \subseteq M$ is still a
subterminal, and its preimage under~$i$ is as well. Since~$M'$ is flabby, there
is an element~$v \in M'$ such that~$i^{-1}[K - u_0] \subseteq \{v\}$. We verify
that~$K \subseteq \{u_0 + i(v)\}$.

Thus let~$u \in K$ be given. Then~$p(u - u_0) = 0$, so by exactness the
set~$i^{-1}[K - u_0]$ is inhabited. It therefore contains~$v$. Thus~$i(v) \in K
- u_0$. Since~$K = \{u\}$, it follows that~$i(v) = u - u_0$, so~$u \in \{u_0 +
i(v)\}$ as claimed.
\end{proof}

Toby Kenney stressed that the notion of an injective set should be regarded as an
interesting strengthening of the constructively rather ill-behaved notion of a
nonempty set~\cite{kenney:injective-choice}. For instance, while the statements
``there is a choice function for every set of nonempty sets'' and even ``there
is a choice function for every set of inhabited sets'' are constructive taboos,
the statement ``there is a choice function for every set of injective sets'' is
constructively neutral. Proposition~\ref{prop:set-of-preimages-flabby} demonstrates
that the notion of a flabby set can be regarded as an interesting intermediate
notion: In the situation of Proposition~\ref{prop:set-of-preimages-flabby}, the
set of preimages is not only not empty or inhabited, but even flabby.

\begin{prop}Let~$0 \to M' \xra{i} M \xra{p} M'' \to 0$ be a short exact
sequence of modules. If~$M'$ and~$M''$ are flabby, so is~$M$.
\end{prop}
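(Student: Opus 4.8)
The plan is to reduce flabbiness of~$M$ to the two ingredients already at hand: flabbiness passes to the quotient~$M''$ directly, while flabbiness of the fibers of~$p$ is exactly the content of Proposition~\ref{prop:set-of-preimages-flabby}. So I would begin with an arbitrary subterminal~$K \subseteq M$ and seek an element~$x \in M$ with~$K \subseteq \{x\}$.

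First I would push~$K$ forward along~$p$. The image~$p[K] \subseteq M''$ is again subterminal: if~$p(u)$ and~$p(v)$ lie in~$p[K]$ with~$u, v \in K$, then~$u = v$ since~$K$ is subterminal, whence~$p(u) = p(v)$. As~$M''$ is flabby, there is an element~$s \in M''$ with~$p[K] \subseteq \{s\}$. This is the crucial upgrade: it records not merely that the~$p$-images of elements of~$K$ agree with each other, but that they all coincide with the \emph{specific, named} section~$s$.

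Now I would pass to the fiber over~$s$. Setting~$X \defeq \{ u \in M \,|\, p(u) = s \}$, flabbiness of~$M'$ together with Proposition~\ref{prop:set-of-preimages-flabby} shows that~$X$ is flabby. Moreover~$K \subseteq X$: for~$u \in K$ we have~$p(u) \in p[K] \subseteq \{s\}$, hence~$p(u) = s$. Thus~$K$ is a subterminal subset of~$X$, subterminality being inherited from~$M$, and flabbiness of~$X$ furnishes an element~$x \in X$ with~$K \subseteq \{x\}$. Since~$x \in X \subseteq M$, this~$x$ is the required element and~$M$ is flabby.

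I expect no substantial obstacle here; the only delicate point is ensuring that we genuinely land inside a \emph{single} fiber rather than a family of fibers whose indices merely happen to be pairwise equal. This is precisely why I first invoke flabbiness of~$M''$ to replace the bare subterminality of~$p[K]$ by an honest containment~$p[K] \subseteq \{s\}$ with a fixed~$s$: only then is the fiber~$X$ well-defined independently of a choice of element of~$K$, so that Proposition~\ref{prop:set-of-preimages-flabby} applies to it and the inclusion~$K \subseteq X$ becomes available intuitionistically.
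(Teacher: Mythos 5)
Your proof is correct, and its overall strategy matches the paper's: both arguments first push $K$ forward along $p$ and use flabbiness of $M''$ to pin down a single element $s$ with $p[K] \subseteq \{s\}$, and then use flabbiness of $M'$ to deal with what happens over $s$. The one difference is in how the second half is organized. You invoke Proposition~\ref{prop:set-of-preimages-flabby} as a black box: the fiber $X = \{ u \in M \,|\, p(u) = s \}$ is flabby, $K$ is a subterminal subset of $X$, done. The paper instead inlines the equivalent argument: it picks a preimage $u_0$ of $s$ by surjectivity of $p$, notes that $i^{-1}[K - u_0]$ is subterminal, obtains $v \in M'$ with $i^{-1}[K - u_0] \subseteq \{v\}$, and exhibits the explicit covering element $u_0 + i(v)$ --- which is precisely the proof of Proposition~\ref{prop:set-of-preimages-flabby} replayed in place. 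So after unfolding the lemma the two proofs coincide; your packaging avoids duplicating that translation trick and is arguably cleaner, while the paper's version has the minor virtue of displaying the witness explicitly. Your closing observation --- that flabbiness of $M''$ must be used \emph{first}, to replace mere subterminality of $p[K]$ by containment in a named singleton $\{s\}$, since otherwise there is no single well-defined fiber to which the lemma could be applied --- is exactly the right intuitionistic subtlety, and it is the same point the paper's inlined proof handles by deriving $p(u) = s$ for each $u \in K$ before exactness can be brought to bear.
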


\begin{proof}Let~$K \subseteq M$ be a subterminal. Then its image~$p[K] \subseteq M''$
is a subterminal as well. Since~$M''$ is flabby, there is an element~$s \in
M''$ such that~$p[K] \subseteq \{ s \}$.

Since~$p$ is surjective, there is an element~$u_0 \in M$ such that~$p(u_0) =
s$.

The preimage~$i^{-1}[K - u_0] \subseteq M'$ is a subterminal. Since~$M'$ is
flabby, there exists an element~$v \in M'$ such that~$i^{-1}[K - u_0] \subseteq
\{v\}$.

Thus~$K \subseteq \{ u_0 + i(v) \}$.
\end{proof}

Noticeably missing here is a statement as follows: ``Let~$0 \to M' \to M \to
M'' \to 0$ be a short exact sequence of modules. If~$M'$ and~$M$ are flabby, so
is~$M''$.'' Assuming Zorn's lemma in the metatheory, this statement is true in
every topos of sheaves over a locale, but we do not know whether it has an
intuitionistic proof and in fact we surmise that it has not.

\section{Flabby objects}
\label{sect:flabby-objects}

\begin{defn}An object~$X$ of an elementary topos~$\E$ is \emph{flabby} if and
only if the statement~``$X$ is a flabby set'' holds in the stack semantics
of~$\E$.\end{defn}

This definition amounts to the following: An object~$X$ of an elementary
topos~$\E$ is flabby if and only if, for any monomorphism~$K \to A$ and any
morphism~$K \to X$, there exists an epimorphism~$B \to A$ and a morphism~$B
\to X$ such that the following diagram commutes.
\[ \xymatrix{
  K \times_A B \ar@{^{(}->}[r]\ar[d] & B \ar@{-->}@/^/[ld] \\
  X
} \]

Instead of referencing arbitrary stages~$A \in \E$, one can also just reference
the generic stage: Let~$\P_{\leq1}(X)$ denote the \emph{object of subterminals}
of~$X$; this object is a certain subobject of~$\P(X) = [X,\Omega_\E]_\E$, the
powerobject of~$X$. The subobject~$K_0$ of~$X \times \P_{\leq1}(X)$ classified by the
evaluation morphism~$X \times \P_{\leq1}(X) \to X \times \P(X) \to \Omega_\E$
is the \emph{generic subterminal} of~$X$. The object~$X$ is flabby if and only
if there exists an epimorphism~$B \to \P_{\leq1}(X)$ and a morphism~$B \to X$
such that the following diagram commutes.
\[ \xymatrix{
  K_0 \times_{\P_{\leq1}(X)} B \ar@{^{(}->}[r]\ar[d] & \P_{\leq1}(X) \ar@{-->}@/^/[ld] \\
  X
} \]

\begin{prop}\label{prop:basic-properties-of-flabby-objects}
Let~$X$ and~$T$ be objects of an elementary topos~$\E$.
\begin{enumerate}
\item If~$X$ is flabby, so is~$X \times T$ as an object of~$\E/T$.
\item The converse holds if the unique morphism~$T \to 1$ is an epimorphism.
\end{enumerate}
\end{prop}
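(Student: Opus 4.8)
The plan is to exploit that the pullback functor $T^* \colon \E \to \E/T$, sending $Y$ to the projection $Y \times T \to T$, is a \emph{logical} functor, together with the observation that flabbiness is a bounded (higher-order) property. Indeed, the diagrammatic characterization recorded after the definition expresses ``$X$ is flabby'' purely in terms of monomorphisms, pullbacks, epimorphisms and commuting triangles, all of which are preserved by any logical functor; equivalently, the interpretation $\llbracket X \text{ is a flabby set} \rrbracket \hookrightarrow 1$ is built from $X$ using finite limits, power objects and image factorizations, and $T^*$ preserves each of these. Since $T^* X = X \times T$, this is the structure we have to transport back and forth.

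For part~(1), I would argue that $T^*$ logical sends the top subobject $\llbracket X \text{ flabby} \rrbracket = 1$ of $\E$ to $\llbracket X \times T \text{ flabby} \rrbracket = T^*1 = 1$ in $\E/T$. Concretely, given flabbiness data in $\E/T$---a monomorphism $K \hookrightarrow A$ over $T$ and a morphism $K \to X$ (using $\Hom_{\E/T}(K, X \times T) = \Hom_\E(K, X)$)---I would forget down to $\E$, where this is exactly flabbiness data for $X$; flabbiness of $X$ then yields an epimorphism $B \to A$ in $\E$ and a morphism $B \to X$ making the triangle commute. Equipping $B$ with the composite $B \to A \to T$ turns $B \to A$ into a morphism over $T$; it remains epic in $\E/T$ because the forgetful functor $\E/T \to \E$ is faithful and hence reflects epimorphisms, and the relevant pullback is unchanged because that functor creates connected limits. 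The morphism $B \to X$ corresponds to a morphism $B \to X \times T$ over $T$, and the triangle commutes in $\E/T$ since it does in $\E$.

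For part~(2), the extra hypothesis that $T \to 1$ is epic makes $T^*$ conservative---a subterminal $V \hookrightarrow 1$ with $V \times T = T$ forces $V = 1$, since $T \to 1$ then factors through the mono $V \hookrightarrow 1$---and a logical conservative functor reflects the truth of bounded statements, giving the converse at once. I would also spell this out diagrammatically: starting from flabbiness data $K \hookrightarrow A$, $K \to X$ in $\E$, apply $T^*$ to obtain a monomorphism $K \times T \hookrightarrow A \times T$ over $T$ and a morphism $K \times T \to X \times T$, feed this into flabbiness of $X \times T$ in $\E/T$ to get an epimorphism $B \to A \times T$ and a map $B \to X \times T$, and then compose with the projections $A \times T \to A$ and $X \times T \to X$. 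Here lies the crucial point: $B \to A$ is epic in $\E$ only because $A \times T \to A$ is the pullback of $T \to 1$ and epimorphisms are stable under pullback in a topos, which is precisely where the hypothesis enters. The last thing to check---and the fiddliest step---is that the commuting triangle descends: one identifies $(K \times T) \times_{A \times T} B \cong K \times_A B$ (the $T$-coordinate being pinned down by the structure map $B \to T$), so that the two composites $K \times_A B \to X$ agree after projecting along $X \times T \to X$. I expect this bookkeeping around the pullback and the projection, rather than any conceptual difficulty, to be the main obstacle.
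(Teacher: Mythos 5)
Your proof is correct, but it takes a genuinely different route from the paper's. The paper proves both parts in a single sentence, by citing Shulman's locality lemma for the stack semantics (\cite[Lemma~7.3]{shulman:stack-semantics}): \emph{any} property formulated in the stack semantics is stable under pullback to $\E/T$ and descends along $T \to 1$ when that map is epic, so nothing specific to flabbiness has to be verified. You instead exploit a feature of flabbiness that the paper leaves implicit: thanks to the generic-stage characterization via $\P_{\leq1}(X)$, flabbiness is a \emph{bounded} formula, so it has an ordinary truth value $\llbracket \text{$X$ is flabby} \rrbracket \hookrightarrow 1$ preserved by the logical functor $T^* \colon \E \to \E/T$, and the epi hypothesis makes $T^*$ conservative on subterminals, giving descent; strictly speaking it is this internal-truth-value formulation, not the stage-wise diagrammatic one (whose quantification over stages $A$ and witnesses $B$ is external), that the logical-functor argument applies to, but your write-up acknowledges this. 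Your diagram chases backing it up are also sound: in (1) the forgetful functor $\E/T \to \E$ is faithful (hence reflects epis) and creates pullbacks; in (2) you correctly isolate the single place where the hypothesis enters, namely that $A \times T \to A$ is epic as a pullback of $T \to 1$, and the identification $(K \times T) \times_{A \times T} B \cong K \times_A B$ holds because the $T$-coordinate is pinned down by the structure map of $B$. As for what each approach buys: the paper's citation is shorter and applies uniformly to genuinely unbounded properties---for instance internal injectivity in the sense of condition~(3) of Proposition~\ref{prop:notions-of-internal-injectivity}, where the quantification over all monomorphisms prevents a bounded reformulation and Shulman's lemma is the tool of choice---whereas your argument is self-contained, needs no stack-semantics metatheory, and records the useful observation that flabbiness in particular is a bounded, Mitchell--B\'enabou expressible, notion.
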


\begin{proof}This holds for any property which can be defined
in the stack semantics~\cite[Lemma~7.3]{shulman:stack-semantics}.
\end{proof}

\begin{prop}\label{prop:flabby-sheaves-objects}
Let~$F$ be a sheaf on a topological space~$X$ (or a locale).
Then~$F$ is flabby as a sheaf if and only if~$F$ is flabby as an object of the
sheaf topos~$\Sh(X)$.
\end{prop}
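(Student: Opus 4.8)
The plan is to unwind the definition of flabby object in the concrete topos $\Sh(X)$ and match it against Definition~\ref{defn:flabby-sheaf}. Recall that $F$ is flabby as an object precisely when, for every stage $A$ and every subterminal subset $K \hookrightarrow F \times A$—equivalently, every subobject $K \hookrightarrow A$ equipped with a morphism $K \to F$—there is an epimorphism $B \to A$ and a morphism $x : B \to F$ whose restriction along $K \times_A B \hookrightarrow B$ agrees with the induced map $K \times_A B \to K \to F$; I abbreviate this last condition as $K \times_A B \subseteq \{x\}$. I would prove the two implications separately, noting first that the backward direction uses only the instance $A = 1$, whereas the forward direction must handle arbitrary stages and is where the real work lies.

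For the backward implication, I would extract the stage-$1$ instance of object-flabbiness. A subterminal $K \hookrightarrow F$ at stage $1$ is a subterminal object of $\Sh(X)$—that is, an open $U \subseteq X$—together with a morphism $U \to F$, i.e.\ a section $s \in F(U)$. Object-flabbiness supplies an epimorphism $B \to 1$ and a morphism $x : B \to F$ with $K \times_1 B \subseteq \{x\}$. Since every epimorphism onto $1$ is locally inhabited, I may refine $B$ to a coproduct $\coprod_i U_i$ for an open cover $X = \bigcup_i U_i$, turning $x$ into a family of sections $x_i \in F(U_i)$. The containment $K \times_1 B \subseteq \{x\}$ then says exactly that $x_i$ and $s$ agree on $U \cap U_i$, so by the gluing axiom they combine to a section on $U \cup U_i$ extending $s$. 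This is precisely the condition of Definition~\ref{defn:flabby-sheaf}.

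For the forward implication, suppose $F$ is flabby as a sheaf and fix a stage $A$ together with a subterminal $K \hookrightarrow A$ and its map $K \to F$. Since subterminals generate $\Sh(X)$, I would choose an epimorphism $\coprod_j U_j \to A$ out of a coproduct of opens, each leg corresponding to a section of $A$ over $U_j$. Pulling $K$ back along the $j$-th leg yields an open $W_j \subseteq U_j$ together with a section $s_j \in F(W_j)$ obtained by restricting $K \to F$. Applying sheaf-flabbiness to each $s_j$ produces an open cover $X = \bigcup_k V_{jk}$ and extensions $\bar s_{jk} \in F(W_j \cup V_{jk})$ of $s_j$.

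Finally I would assemble these data. Setting $B \defeq \coprod_{j,k} \bigl(U_j \cap (W_j \cup V_{jk})\bigr)$ and mapping each summand into $A$ through $U_j$, the computation $\bigcup_k (W_j \cup V_{jk}) = X$ shows that the $j$-th block covers $U_j$, so $B \to A$ is an epimorphism; restricting $\bar s_{jk}$ to each summand defines a morphism $x : B \to F$. Pulling $K$ back along the $(j,k)$-summand recovers $W_j$, on which $x$ restricts to $\bar s_{jk}|_{W_j} = s_j$, so the factorization $K \times_A B \subseteq \{x\}$ holds. The main obstacle is exactly this bookkeeping at arbitrary stages: one must verify that the reassembled $B \to A$ is epic and that $x$ agrees with $K \to F$ after pullback, which is where the locality built into Definition~\ref{defn:flabby-sheaf} and the gluing axiom do the essential work.
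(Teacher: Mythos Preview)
Your argument is correct and follows essentially the same route as the paper: unwind the object-level flabbiness condition in~$\Sh(X)$ using that subobjects of representables are opens, and match against Definition~\ref{defn:flabby-sheaf}. Two small differences are worth noting. First, the paper only spells out the forward (``only if'') direction and, rather than treating an arbitrary stage~$A$, observes that it suffices to check stages of the form~$A = \Hom(\cdot,U)$; this absorbs your preliminary step of covering~$A$ by opens, since the condition for general~$A$ follows formally once it is known on a generating family. Second, your forward argument (like the paper's first version) tacitly uses choice to select the covers~$(V_{jk})_k$ and extensions~$\bar s_{jk}$; the paper points out that this can be avoided by the standard trick of enlarging the index set of~$B$ to include the extensions as part of the data, i.e.\ indexing over pairs~$(j,k,t)$ with~$t$ an actual extension.
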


\begin{proof}The proof is routine; we only verify the ``only if'' direction.
Let~$F$ be flabby as a sheaf. It suffices to verify the defining condition for stages
of the form~$A = \Hom(\cdot,U)$, where~$U$ is an open of~$X$. A monomorphism~$K
\to A$ then amounts to an open~$V \subseteq U$ (the union of all opens on
which~$K$ is inhabited). A morphism~$K \to F$ amounts to a section~$s \in
F(V)$. Since~$F$ is flabby as a sheaf, there is an open covering~$X =
\bigcup_{i \in I} V_i$ such that, for all~$i$, the section~$s$ can be extended
to a section~$s_i$ of~$V \cup V_i$. The desired epimorphism is~$B \defeq
\coprod_i \Hom(\cdot,(V \cup V_i) \cap U) \to A$, and the desired morphism~$B
\to X$ is given by the sections~$s_i|_{(V \cup V_i) \cap U}$.

As stated, the argument in the previous paragraph requires the axiom of choice
to pick the extensions~$s_i$; this can be avoided by a standard trick of
expanding the index set of the coproduct to include the choices: We redefine $B \defeq
\coprod_{(i,t) \in I'} \Hom(\cdot, (V \cup V_i) \cap U)$, where~$I' = \{ (i \in
I, t \in F(V \cup V_i)) \,|\, t|_V = s \}$ and define the morphism~$B \to X$ on
the~$(i,t)$-summand by~$t|_{(V \cup V_i) \cap U}$.
\end{proof}

\begin{prop}\label{prop:global-elements}
Let~$X$ be a flabby object of a localic topos~$\E$. If
Zorn's lemma is available in the metatheory, then~$X$ possesses a global element (a morphism~$1 \to X$).
\end{prop}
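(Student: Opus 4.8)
The plan is to replay, inside the localic topos $\E \simeq \Sh(L)$, the Zorn's-lemma argument that followed Definition~\ref{defn:flabby-sheaf}. By Proposition~\ref{prop:flabby-sheaves-objects}, the object~$X$ is flabby as a sheaf on~$L$ in the sense of Definition~\ref{defn:flabby-sheaf}; concretely, for every open~$U$ and every section~$s \in X(U)$ there is an open covering $L = \bigcup_i V_i$ such that~$s$ extends to a section over each~$U \cup V_i$. This is the only property of~$X$ I shall use, and it is precisely the form in which flabbiness becomes a statement about extending sections along opens.

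First I would introduce the poset~$P$ of \emph{partial global elements}: its members are pairs~$(U, s)$ with~$U$ an open of~$L$ and~$s \in X(U)$, ordered by extension, so that $(U, s) \le (V, t)$ if and only if $U \subseteq V$ and $t|_U = s$. This poset is nonempty, as the least open~$\bot$ carries the unique section of the singleton~$X(\bot)$ (equivalently, the empty open gives the unique morphism $0 \to X$). Every chain $\{(U_\alpha, s_\alpha)\}_\alpha$ admits an upper bound: along a chain the sections~$s_\alpha$ are pairwise compatible, hence by the gluing axiom for the sheaf~$X$ they amalgamate to a section over~$\bigcup_\alpha U_\alpha$. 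Zorn's lemma -- the one place the metatheoretic hypothesis is spent -- then yields a maximal element $(U_0, s_0) \in P$.

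It remains to show $U_0 = L$. Here I would feed~$s_0$ into the flabbiness of~$X$: there is an open covering $L = \bigcup_i V_i$ such that for each~$i$ the section~$s_0$ extends to some $s_0^{(i)} \in X(U_0 \cup V_i)$. Each pair $(U_0 \cup V_i, s_0^{(i)})$ dominates $(U_0, s_0)$ in~$P$, so maximality forces $U_0 \cup V_i = U_0$, that is, $V_i \subseteq U_0$. Since the~$V_i$ cover~$L$, we conclude $L = \bigcup_i V_i \subseteq U_0$, whence~$U_0 = L$ and~$s_0$ is a global section, i.e.\ a morphism $1 \to X$, as desired.

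The hard part is not the Zorn bookkeeping, which is identical to the discussion following Definition~\ref{defn:flabby-sheaf}, but rather making the appeal to Proposition~\ref{prop:flabby-sheaves-objects} explicit -- that is, unwinding the categorical definition of a flabby object at the stage~$A = 1$ into the concrete extendability statement used above. Applying the defining diagram to the monomorphism $U_0 \hookrightarrow 1$ and the section~$s_0 \colon U_0 \to X$ produces an epimorphism $B \to 1$ together with a morphism $B \to X$; in~$\Sh(L)$ such an epimorphism amounts to a covering $L = \bigcup_i V_i$ with local lifts $b_i \in B(V_i)$, and commutativity of the diagram says exactly that the sections of~$X$ induced by the~$b_i$ agree with~$s_0$ on each~$U_0 \cap V_i$, so that they glue with~$s_0$ to the required extensions over~$U_0 \cup V_i$. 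I expect no difficulty beyond this translation, and if one is content to quote Proposition~\ref{prop:flabby-sheaves-objects} directly it is already absorbed, leaving only the three paragraphs above.
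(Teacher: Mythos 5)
Your proof is correct and is essentially the paper's own argument: the paper's proof is merely a pointer to the discussion following Definition~\ref{defn:flabby-sheaf}, which is exactly the Zorn's-lemma argument you replay (maximal partial section, then flabbiness plus maximality forces the domain to be all of~$L$). You add two pieces of bookkeeping the paper leaves implicit -- the explicit chain-completeness of the poset of partial sections, and the unwinding of the stage-$A=1$ instance of the categorical definition into the covering-extension statement, which is the ``routine'' direction of Proposition~\ref{prop:flabby-sheaves-objects} the paper does not write out -- both of which are carried out correctly.
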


\begin{proof}This is a restatement of the discussion following
Definition~\ref{defn:flabby-sheaf}.
\end{proof}

\begin{prop}\label{prop:pushforward-of-flabby-objects}
Let~$f : \F \to \E$ be a geometric morphism. If~$f_*$ preserves epimorphisms,
then~$f_*$ preserves flabby objects.\end{prop}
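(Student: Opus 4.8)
The plan is to unwind the explicit reformulation of flabbiness stated just after the definition and to transport a lifting obtained in~$\F$ back to~$\E$ along the adjunction~$f^* \dashv f_*$. Write~$\eta$ for the unit of this adjunction. The only structural inputs are: that~$f^*$ is left exact, hence preserves monomorphisms; that~$f_*$ is a right adjoint, hence preserves all limits, in particular pullbacks; and the hypothesis that~$f_*$ preserves epimorphisms.

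So let~$X$ be a flabby object of~$\F$, and suppose given a monomorphism~$m : K \to A$ in~$\E$ together with a morphism~$\phi : K \to f_*(X)$; I must produce an epimorphism~$\tilde B \to A$ and a morphism~$\tilde B \to f_*(X)$ making the defining diagram commute. First I would transpose~$\phi$ to a morphism~$\hat\phi : f^*(K) \to X$ and apply~$f^*$ to~$m$, obtaining a monomorphism~$f^*(m) : f^*(K) \to f^*(A)$ in~$\F$. Flabbiness of~$X$, applied to this monomorphism and to~$\hat\phi$, then yields an epimorphism~$e : B \to f^*(A)$ in~$\F$ and a morphism~$\psi : B \to X$ such that, writing~$P \defeq f^*(K) \times_{f^*(A)} B$ with projections~$\pi_1 : P \to f^*(K)$ and~$\pi_2 : P \to B$, the relation~$\psi \circ \pi_2 = \hat\phi \circ \pi_1$ holds.

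Next I would push this lifting forward. Applying~$f_*$ turns~$e$ into an epimorphism~$f_*(e) : f_*(B) \to f_* f^*(A)$---this is the one step that genuinely invokes the hypothesis---and yields a morphism~$f_*(\psi) : f_*(B) \to f_*(X)$. I then set~$\tilde B \defeq A \times_{f_* f^*(A)} f_*(B)$, the pullback of~$f_*(e)$ along the unit~$\eta_A : A \to f_* f^*(A)$, with projections~$p : \tilde B \to A$ and~$q : \tilde B \to f_*(B)$. Since pullbacks of epimorphisms are again epimorphisms in any topos, $p$ is an epimorphism, and~$\tilde\psi \defeq f_*(\psi) \circ q$ is the candidate morphism~$\tilde B \to f_*(X)$.

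It remains to verify that~$\tilde\psi$ restricts along~$K \times_A \tilde B \hookrightarrow \tilde B$ to the composite~$K \times_A \tilde B \to K \xra{\phi} f_*(X)$. Here I would use that~$f_*$ preserves the pullback~$P$, so that~$f_*(P) = f_* f^*(K) \times_{f_* f^*(A)} f_*(B)$, together with the naturality identity~$\eta_A \circ m = f_* f^*(m) \circ \eta_K$; pullback pasting then identifies~$K \times_A \tilde B$ with~$K \times_{f_* f^*(K)} f_*(P)$. Under this identification the composite through~$\tilde\psi$ becomes~$f_*(\psi \circ \pi_2)$ precomposed with the evident projection, which by the relation~$\psi \circ \pi_2 = \hat\phi \circ \pi_1$ equals~$f_*(\hat\phi) \circ f_*(\pi_1)$; tracing the projection back through the pullback square and invoking the relation~$f_*(\hat\phi) \circ \eta_K = \phi$ that expresses~$\hat\phi$ as the transpose of~$\phi$ recovers exactly~$K \times_A \tilde B \to K \xra{\phi} f_*(X)$. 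The hard part is conceptual rather than computational: everything hinges on~$f_*(e)$ remaining an epimorphism, which is precisely the hypothesis and which in general fails---this failure being, after all, the very reason higher direct images and sheaf cohomology are nontrivial. Once one commits to the bookkeeping of units and the preservation of~$P$ by~$f_*$, the remaining verification is a routine diagram chase.
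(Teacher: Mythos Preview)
Your proof is correct and follows the same strategy as the paper: transpose along the adjunction, apply flabbiness of~$X$ in~$\F$, push forward, and use that~$f_*$ preserves the resulting epimorphism. The only difference is that the paper first reduces to the case~$A = 1$ (so that~$f^*A = 1$, $f_*f^*A = 1$, and the unit~$\eta_A$ is an isomorphism), which makes your pullback~$\tilde B = A \times_{f_*f^*A} f_*B$ collapse to~$f_*B$ itself and eliminates the entire bookkeeping with~$\eta$ and pullback pasting; your argument is in effect the unfolded version of that reduction, carried out at a general stage~$A$.
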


\begin{proof}Let~$X \in \F$ be a flabby object.
Let~$k : K \to A$ be a monomorphism in~$\E$ and let~$x : K \to f_*(X)$ be an
arbitrary morphism. Without loss of generality, we may assume that~$A$ is the
terminal object~$1$ of~$\E$. Then~$f^*(k) : f^*(K) \to 1$ is a monomorphism
in~$\F$ and the adjoint transpose~$x^t : f^*(K) \to X$
is a morphism in~$\F$. Since~$X$ is flabby, there is an epimorphism~$B \to 1$
in~$\F$ and a morphism~$y : B \to X$ such that the morphism~$f^*(K) \times B
\to X$ factors over~$y$. Hence~$x$ factors over~$f_*(y) : f_*(B) \to f_*(X)$.
We conclude because the morphism~$f_*(B) \to f_*(1)$ is an epimorphism by
assumption.
\end{proof}

The assumption on~$f_*$ of Proposition~\ref{prop:pushforward-of-flabby-objects}
is for instance satisfied if~$f$ is a local geometric morphism.

\begin{defn}An object~$I$ of an elementary topos~$\E$ is \emph{externally
injective} if and only if for any monomorphism~$A \to B$ in~$\E$, the canonical
map~$\Hom_\E(B,I) \to \Hom_\E(A,I)$ is surjective. It is \emph{internally
injective} if and only if for any monomorphism~$A \to B$ in~$\E$, the canonical
morphism~$[B,I] \to [A,I]$ between Hom objects is an epimorphism in~$\E$.
\end{defn}

If~$R$ is a ring in an elementary topos~$\E$, a similar definition can be given
for~$R$-modules in~$\E$, referring to the set respectively the object of
linear maps. The condition for an object to be internally injective can be
rephrased in various ways. The following proposition lists five of these
conditions. The equivalence of the first four is due to
Roswitha Harting~\cite{harting:locally-injective}.

\begin{prop}\label{prop:notions-of-internal-injectivity}
Let~$\E$ be an elementary topos. Then the following statements about an
object~$I \in \E$ are equivalent.
\begin{enumerate}
\item[(1)] $I$ is internally injective.
\item[(1')] For any morphism $p : A \to 1$ in $\E$, the object $p^*(I)$ has property~(1)
as an object of $\E/A$.
\item[(2)] The functor~$[\cdot, I] : \E^\op \to \E$ maps monomorphisms in $\E$
to morphisms for which any global element of the target locally (after change of
base along an epimorphism) possesses a preimage.
\item[(2')] For any morphism $p : A \to 1$ in $\E$, the object $p^*(I)$ has property~(2)
as an object of $\E/A$.
\item[(3)] The statement~``$I$ is an injective set'' holds in the stack
semantics of~$\E$.
\end{enumerate}
\end{prop}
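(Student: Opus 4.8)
The plan is to establish the cycle of implications $(1') \Rightarrow (1) \Rightarrow (2) \Rightarrow (2') \Rightarrow (1')$ together with the equivalence $(2') \Leftrightarrow (3)$, throughout reading the primed conditions as the versions of their unprimed counterparts asserted in every slice $\E/S$, applied to the reindexed object $p^*I$ for $p : S \to 1$. Two of the implications are immediate: $(1') \Rightarrow (1)$ and $(2') \Rightarrow (2)$ follow by specializing $p$ to the identity $1 \to 1$, and $(1) \Rightarrow (2)$ holds because an epimorphism $[B,I] \to [A,I]$ is in particular locally surjective on global elements, which is exactly what (2) demands.

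The key step is $(2) \Rightarrow (2')$. Here I would exploit that the object occurring in the primed conditions is not an arbitrary object of the slice but the reindexed object $p^*I = I \times S$, for which the adjunction $\Sigma_S \dashv p^*$ yields a natural identification $\Hom_{\E/S}(C, p^*I) \cong \Hom_\E(C, I)$. Concretely, given a monomorphism $m : A \to B$ in $\E/S$ and a morphism $f = \langle \bar f, a \rangle : A \to p^*I$ over $S$ (so $a : A \to S$ is the structure map and $\bar f : A \to I$ its $I$-component), an extension of $f$ over $S$ is the same datum as an extension of $\bar f$ along $m$ in $\E$, since the $S$-component of any extension is forced to be the structure map of $B$. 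Applying (2) to the monomorphism $m$ (still monic in $\E$) and the global element $\bar f : A \to I$ produces an epimorphism $E \to 1$ and a local extension $\tilde g : B \times E \to I$; pairing $\tilde g$ with the structure map of $B$ turns it into an extension of $f$ over the epimorphism $S \times E \to S$, establishing (2) in the slice $\E/S$. Running this in every slice gives (2').

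For $(2') \Rightarrow (1')$ I would upgrade from global elements to arbitrary generalized elements by descending further along slices: to see that $[B, p^*I] \to [A, p^*I]$ is an epimorphism in $\E/S$, it suffices to lift, locally, every generalized element $u : U \to [A, p^*I]_{\E/S}$, and such an element is nothing but a global element of $[u^*A, q^*I]_{\E/U}$ in the deeper slice $\E/U$ (with $q = p \circ u : U \to 1$), where (2) is available by (2'); the resulting local lift witnesses the required local surjectivity. Finally, $(2') \Leftrightarrow (3)$ is the unfolding of Shulman's stack semantics: interpreting the unbounded quantifiers of the sentence ``$I$ is an injective set'' forces the universally quantified objects $A, B$ and the monomorphism and map to range over all slices, while the bounded existential ``$\exists g$'' is witnessed only after passing to an epimorphic cover, which is precisely the content of (2) read in each slice.

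I expect the main obstacle to be the careful bookkeeping in $(2') \Leftrightarrow (3)$: one must match the object-level quantifiers of the stack semantics with passage to slices and the local character of the existential quantifier with change of base along an epimorphism, using the stability of stack-semantic statements under reindexing (the same principle underlying Proposition~\ref{prop:basic-properties-of-flabby-objects}) consistently. The conceptual heart, however, is the observation in $(2) \Rightarrow (2')$ that extension problems for the reindexed object $p^*I$ collapse to extension problems for $I$ in the base; this is what makes internal injectivity a local notion, and it is the place where the hypothesis that we reindex a fixed object $I$, rather than consider an arbitrary object of the slice, is essential.
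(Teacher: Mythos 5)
Your proposal is correct and follows essentially the same route as the paper: the same trivial implications, the same two substantive steps --- $(2)\Rightarrow(2')$ via the extra left adjoint $p_! = \Sigma_S$ of $p^*$, and $(2')\Rightarrow(1')$ by passing to deeper slices where generalized elements become global --- and a direct unfolding of the stack semantics to handle (3). The only cosmetic differences are that you pair (3) with $(2')$ where the paper pairs it with $(1')$ (the two unfoldings differ by the same trivial re-indexing used in $(2')\Rightarrow(1')$), and that you phrase $(2)\Rightarrow(2')$ with external hom-sets via the adjunction $\Sigma_S \dashv p^*$, thereby making self-contained what the paper states at the level of internal homs ($p_* \circ [\cdot, p^*(I)]_{\E/A} \cong [\cdot,I]_\E \circ p_!$) and delegates to Harting for details.
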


\begin{proof}
The implications (1)~$\Rightarrow$~(2), (1')~$\Rightarrow$~(2'),
(1')~$\Rightarrow$~(1) and (2')~$\Rightarrow$~(2) are trivial.

The equivalence (1')~$\Leftrightarrow$~(3) follows directly from the
interpretation rules of the stack semantics.

The implication (2)~$\Rightarrow$~(2') employs the
extra left adjoint $p_! : \E/A \to \E$ of $p^* : \E
\to \E/A$~(which maps an object~$(X \to A)$ to~$X$), as in the usual proof that
injective sheaves remain injective when
restricted to smaller open subsets: We have that $p_* \circ [\cdot, p^*(I)]_{\E/A}
\cong [\cdot, I]_\E \circ p_!$, the functor $p_!$ preserves monomorphisms, and one
can check that $p_*$ reflects the property that global elements locally possess
preimages. Details are in~\cite[Thm.~1.1]{harting:locally-injective}.\footnote{Harting formulates
her theorem for abelian group objects, and has to assume that~$\E$ contains a
natural numbers object to ensure the existence of an abelian version of~$p_!$.}

The implication (2')~$\Rightarrow$~(1') follows by performing an extra change of
base, exploiting that any non-global element becomes a global element after a suitable
change of base.
\end{proof}

Let~$R$ be a ring in~$\E$. Then the analogue of
Proposition~\ref{prop:notions-of-internal-injectivity} holds for~$R$-modules
in~$\E$, if~$\E$ is assumed to have a natural numbers object. The extra
assumption is needed in order to construct the left adjoint~$p_! :
\Mod_{\E/A}(R \times A) \to \Mod_\E(R)$. Phrased in the internal language, this
adjoint maps a family~$(M_a)_{a \in A}$ of~$R$-modules to the direct
sum~$\bigoplus_{a \in A} M_a$. Details on this construction, phrased in the
language of sets but interpretable in the internal language, can for instance
be found in~\cite[page~54]{mines-richman-ruitenburg:constructive-algebra}.

Somewhat surprisingly, and in stark contrast with the situation for internally
projective objects (which are defined dually), internal injectivity coincides
with external injectivity for localic toposes. In the special case of sheaves
of abelian groups, this result is due to Roswitha
Harting~\cite[Proposition~2.1]{harting:locally-injective}.

\begin{thm}\label{thm:injectivity-external-internal}
Let~$I$ be an object of an elementary topos~$\E$. If~$I$ is externally
injective, then~$I$ is also internally injective. The converse holds if~$\E$ is
localic and Zorn's lemma is available in the metatheory.
\end{thm}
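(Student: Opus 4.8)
The plan is to treat the two directions separately, neither of which requires venturing beyond the elementary results already assembled.

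For the forward implication I would argue directly with the definition of internal injectivity in terms of Hom objects, proving that $[m, I] : [B,I] \to [A,I]$ is an epimorphism for every monomorphism $m : A \to B$. The plan is to lift an arbitrary generalized element $g : C \to [A,I]$. Transposing across the cartesian closed structure, $g$ corresponds to a morphism $\tilde g : C \times A \to I$. Since products preserve monomorphisms, $1_C \times m : C \times A \to C \times B$ is again a monomorphism in~$\E$, so external injectivity of~$I$ supplies an extension $h : C \times B \to I$ with $h \circ (1_C \times m) = \tilde g$. Transposing back yields a morphism $\hat g : C \to [B,I]$ with $[m,I] \circ \hat g = g$. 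Since such a lift exists for every generalized element --- in particular for $\mathrm{id}_{[A,I]}$ --- the morphism $[m,I]$ is a split epimorphism, hence an epimorphism. No hypothesis on~$\E$ is needed here.

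For the converse I would pass through the flabbiness machinery. Assuming $I$ internally injective, Proposition~\ref{prop:notions-of-internal-injectivity}(3) tells us that ``$I$ is an injective set'' holds in the stack semantics of~$\E$. Fix a monomorphism $m : A \to B$ and a morphism $f : A \to I$; we must extend $f$ along~$m$. Interpreting Lemma~\ref{lemma:set-of-extensions-flabby}(1) internally --- which is legitimate, since its proof is purely intuitionistic and uses only injectivity of~$I$ --- the object of extensions $E \defeq \{ \bar f \in [B,I] \mid \bar f \circ m = f \}$ is a flabby object of~$\E$. Now I would invoke that $\E$ is localic together with Proposition~\ref{prop:global-elements}, which (using Zorn's lemma in the metatheory) furnishes a global element $1 \to E$. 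Unwinding, such a global element is precisely a morphism $\bar f : B \to I$ with $\bar f \circ m = f$, that is, an extension of~$f$. Hence $\Hom_\E(B,I) \to \Hom_\E(A,I)$ is surjective and $I$ is externally injective. The module analogue would proceed identically, replacing Lemma~\ref{lemma:set-of-extensions-flabby}(1) by its linear counterpart~(2).

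The forward direction is essentially formal. The main work, and the only place where the localic and Zorn hypotheses enter, is the converse: the two key moves are recognizing that the externally-wanted extensions are exactly the global elements of the internally-flabby object of extensions, and that flabbiness over a locale upgrades --- via Zorn in the metatheory --- into the existence of an honest global section. I expect the only subtlety to lie in checking that the internal interpretation of Lemma~\ref{lemma:set-of-extensions-flabby} genuinely produces the object~$E$ whose global elements match the external fibre of the restriction map, rather than in any hard computation.
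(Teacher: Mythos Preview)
Your proposal is correct and matches the paper's proof almost exactly. The converse is identical: form the object~$F = \{\bar f \in [B,I] \mid \bar f \circ m = f\}$, observe it is flabby by interpreting Lemma~\ref{lemma:set-of-extensions-flabby}(1) internally, and extract a global element via Proposition~\ref{prop:global-elements}. For the forward direction the paper takes a marginally shorter route---it notes that external injectivity immediately yields Condition~(2) of Proposition~\ref{prop:notions-of-internal-injectivity} (global elements of~$[A,I]$ lift to~$[B,I]$ without passing to a cover) and then appeals to the equivalences there---whereas you verify Condition~(1) directly via the transpose trick with~$1_C \times m$; the two arguments have the same content, and yours even shows that~$[m,I]$ is split epi.
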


\begin{proof}For the ``only if'' direction, let~$I$ be an object
which is externally injective. Then~$I$ satisfies Condition~(2) in
Proposition~\ref{prop:notions-of-internal-injectivity}, even without having to
pass to covers.

For the ``if'' direction, let~$I$ be an internally
injective object. Let~$i : A \to B$ be a monomorphism in~$\E$ and let~$f :
A \to I$ be an arbitrary morphism. We want to show that there exists an
extension $B \to I$ of~$f$ along~$i$. To this end, we consider the object of
such extensions, defined by the internal expression
\[ F \defeq \{ \bar{f} \in [B,I] \,|\, \bar{f} \circ i = f \}. \]
Global elements of~$F$ are extensions of the kind we are looking for.
By Lemma~\ref{lemma:set-of-extensions-flabby}(1), interpreted in~$\E$, this object is flabby.
By Proposition~\ref{prop:global-elements}, it has a global element.
\end{proof}

The analogue of Theorem~\ref{thm:injectivity-external-internal} for modules
holds as well, if~$\E$ is assumed to have a natural numbers object. The proof
carries over word for word, only referencing
Lemma~\ref{lemma:set-of-extensions-flabby}(2) instead of
Lemma~\ref{lemma:set-of-extensions-flabby}(1).
It seems that Roswitha Harting was not aware of this generalization, even though she did show that
injectivity of sheaves of modules over topological spaces is a local
notion~\cite[Remark~5]{harting:remark}, as she
(mistakenly) states in~\cite[page~233]{harting:remark} that ``the notions of
injectivity and internal injectivity do not coincide'' for modules.

It is worth noting that, because the internal language machinery was at that
point not as well-developed as it is today, Harting had to go to considerable
length to construct internal direct sums of abelian group
objects~\cite{harting:coproduct}, and in order to verify that taking internal
direct sums is faithful she felt the need to employ Barr's
metatheorem~\cite[Theorem~1.7]{harting:effacements}. Nowadays we can verify
both statements by simply carrying out an intuitionistic proof in the case of
the topos of sets and then trusting the internal language to obtain the
generalization to arbitrary elementary toposes with a natural numbers object.

Since we were careful in Section~\ref{sect:flabby-sets} to use the law of
excluded middle and the axiom of choice only where needed, most results of that
section carry over to flabby and internally injective objects. Specifically, we
have:

\begin{scholium}\label{scholium:properties-of-flabby-objects}
For any elementary topos~$\E$:
\begin{enumerate}
\item Any object embeds into an internally injective object.
\item (If~$\E$ has a natural numbers object.) The underlying unstructured
object of an internally injective module is internally injective.
\item Any internally injective object is flabby.
\item Any object embeds into a flabby object.
\item The terminal object is flabby. The product of flabby objects is flabby.
\item Let~$I$ be an internally injective object. Let~$T$ be an arbitrary
object. Then~$[T,I]$ is a flabby object.
\item (If~$\E$ has a natural numbers object.) Let~$I$ be an internally
injective~$R$-module. Let~$T$ be an arbitrary~$R$-module. Then~$[T,I]_R$, the
subobject of the internal Hom consisting only of the linear maps, is a flabby
object.
\item Let~$0 \to M' \to M \to M'' \to 0$ be a short exact sequence
of~$R$-modules in~$\E$. If~$M'$ and~$M''$ are flabby objects, so is~$M$.
\end{enumerate}
\end{scholium}

\begin{proof}We established the analogous statements for sets and modules purely
intuitionistically in Section~\ref{sect:flabby-sets}, and the stack semantics
is sound with respect to intuitionistic logic.
\end{proof}

\begin{scholium}\label{scholium:exact-as-presheaves}
Let~$0 \to M' \to M \to M'' \to 0$ be a short exact sequence
of~$R$-modules in a localic topos~$\E$. Let~$M'$ be a flabby object.
Assuming Zorn's lemma in the metatheory, the induced sequence~$0 \to
\Gamma(M') \to \Gamma(M) \to \Gamma(M'') \to 0$ of~$\Gamma(R)$-modules is exact,
where~$\Gamma(X) = \Hom_\E(1,X)$.\end{scholium}

\begin{proof}We only have to verify exactness at~$\Gamma(M'')$, so let~$s \in
\Gamma(M'')$. Interpreting Proposition~\ref{prop:set-of-preimages-flabby}
in~$\E$, we see that the object of preimages of~$s$ is flabby. Since~$\E$ is
localic, this object is a flabby sheaf; since Zorn's lemma is available, it
possesses a global element. Such an element is the desired preimage of~$s$
in~$\Gamma(M)$.\end{proof}

If~$\E$ is not necessarily localic or Zorn's lemma is not available, only a
weaker substitute for Scholium~\ref{scholium:exact-as-presheaves} is available:
Given~$s \in \Gamma(M'')$, the object of preimages of~$s$ is flabby. In
particular, given any point of~$\E$, we can extend any local preimage of~$s$ to
a preimage which is defined on an open neighborhood of that point. We believe
that there are situations in which this weaker substitute is good enough,
similar to how in constructive algebra often the existence of a sufficiently
large field extension is good enough where one would classically blithely pass
to an algebraic closure.

\begin{rem}A direct generalization of the traditional notion of a flabby sheaf, as
opposed to our reimagining in Definition~\ref{defn:flabby-sheaf}, to
elementary toposes is the following. An object~$X$ of an elementary topos~$\E$
is \emph{strongly flabby} if and only if, for any monomorphism~$K \to 1$
in~$\E$, any morphism~$K \to X$ lifts to a morphism~$1 \to X$.

One can verify, purely intuitionistically, that a sheaf~$F$ on a space~$T$ is
flabby in the traditional sense if and only if~$F$ is a strongly flabby object
of~$\Sh(T)$.

The notion of strongly flabby objects is, however, not local (in the same sense
that the notion of flabby objects is, as stated in
Proposition~\ref{prop:basic-properties-of-flabby-objects})
and therefore cannot be characterized in the internal language. A specific
example is the~$G$-set~$G$ (with the translation action), considered as an
object of the topos~$BG$ of~$G$-sets, where~$G$ is a nontrivial group.
This object is not strongly flabby, since the morphism~$\emptyset \to 1$ does
not lift, but its pullback to the slice~$BG/G \simeq \Set$ is (assuming
the law of excluded middle in the metatheory), and the unique morphism~$G \to
1$ is indeed an epimorphism.
\end{rem}

\section{Higher direct images as internal sheaf cohomology}
\label{sect:higher-direct-images}

Let~$X$ be a locale and let~$f : Y \to X$ be an over-locale. By the fundamental
relation between locales and topological spaces, this situation arises
for instance, when given a sober topological space and a topological
space over it, as is often the case in algebraic topology or algebraic
geometry. Let a sheaf~$\O_Y$ of rings on~$Y$ be given. Then the
traditional way to define the \emph{higher direct images} of a sheaf~$E$
of~$\O_Y$-modules is to pick an injective resolution~$0 \to E \to I^\bullet$
and set~$R^n f_*(E) \defeq H^n(f_*(I^\bullet))$.

Assuming the axiom of choice, there are enough injective sheaves of modules so
that this recipe can be carried out. The resulting sheaf
of~$f_*\O_Y$-modules is well-defined in the following sense: Given a further
injective resolution~$0 \to E \to J^\bullet$, there is up to homotopy precisely
one morphism~$I^\bullet \to J^\bullet$ compatible with the identity on~$E$, and
this morphism induces an isomorphism on cohomology.

Higher direct images are pictured as a ``relative'' version of sheaf
cohomology. Due to the result that injectivity of sheaves of modules can be
characterized in the internal language, we can give a precise
rendering of this slogan: We can understand higher direct images as
internal sheaf cohomology.

The details are as follows. The over-locale~$Y$ corresponds to a locale~$I(Y)$ internal to~$\Sh(X)$, in
such a way that the category of internal sheaves over this internal locale
coincides with~$\Sh(Y)$; in particular, a given sheaf~$E$ of~$\O_Y$-modules can
be regarded as a sheaf over~$I(Y)$. Under this equivalence, the morphism~$f : Y
\to X$ corresponds to the unique morphism~$I(Y) \to \mathrm{pt}$ to the
internal one-point locale. Hence it makes sense to construct, from the internal
point of view of~$\Sh(X)$, the sheaf cohomology~$H^n(I(Y), E)$ of~$E$.

Usually one would not expect an internal construction which depends on
arbitrary choices to yield a globally-defined sheaf over~$X$ -- following the
definition of the stack semantics we only obtain a family of sheaves defined on
members of some open covering of~$X$; but we verify in
Theorem~\ref{thm:higher-direct-images-as-internal-sheaf-cohomology} below that in our
case, it does, and that the resulting sheaf coincides with~$R^n f_*(E)$.

\begin{lemma}\label{lemma:notions-of-internal-injectivity}
Let~$Y$ be a ringed locale over a locale~$X$. Let~$I$ be a sheaf of modules over~$Y$.
Assuming Zorn's lemma in the metatheory, the following statements are equivalent:
\begin{enumerate}
\item $I$ is an injective sheaf of modules.
\item From the point of view of~$\Sh(Y)$, $I$ is an injective module.
\item From the point of view of~$\Sh(X)$, $I$ is an injective module from the
point of view of~$\Sh(I(Y))$.
\item From the point of view of~$\Sh(X)$, $I$ is an injective sheaf of modules
on~$I(Y)$.
\end{enumerate}
\end{lemma}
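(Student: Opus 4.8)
The plan is to read the four conditions as the two-by-two combination of \{internally, externally\} injective with the two ways of presenting the same module: directly over the locale~$Y$, or in two steps as a module over the internal locale~$I(Y)$ from the point of view of~$\Sh(X)$. Thus~(1) is external injectivity in~$\Sh(Y)$, (2) internal injectivity in~$\Sh(Y)$, (3) internal injectivity of~$I$ on~$I(Y)$ as seen from~$\Sh(X)$, and~(4) external injectivity of~$I$ on~$I(Y)$ as seen from~$\Sh(X)$. The two engines are the module analogue of Theorem~\ref{thm:injectivity-external-internal} (available since~$\Sh(Y)$ is localic over~$\Set$ and has a natural numbers object) and the compatibility of the stack semantics with the fundamental equivalence~$\Sh(Y) \simeq \Sh_{\Sh(X)}(I(Y))$ recalled before the lemma.

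First I would dispatch the one-step equivalence~(1)~$\Leftrightarrow$~(2): it is exactly the module version of Theorem~\ref{thm:injectivity-external-internal} applied to the localic topos~$\Sh(Y)$, whose forward implication is choice-free and whose converse consumes the metatheoretic Zorn's lemma. Next, (2)~$\Leftrightarrow$~(3): unwinding the module analogue of Proposition~\ref{prop:notions-of-internal-injectivity}(3), both sides assert that the formula ``$I$ is an injective module'' holds in a stack semantics---that of~$\Sh(Y)$ on the one hand, and that of~$\Sh(I(Y))$ read inside the stack semantics of~$\Sh(X)$ on the other. Under the equivalence~$\Sh(Y) \simeq \Sh_{\Sh(X)}(I(Y))$ these two stack semantics agree by the iteration property of Shulman's construction~\cite{shulman:stack-semantics}; no choice principle intervenes here, so this equivalence is unconditional.

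The delicate step is the two-step external-versus-internal comparison~(3)~$\Leftrightarrow$~(4). The implication~(4)~$\Rightarrow$~(3) is the forward (external~$\Rightarrow$~internal) half of the module theorem interpreted in the internal logic of~$\Sh(X)$; since that half is choice-free it is valid in any topos, and a natural numbers object is available because~$\Sh(I(Y))$ inherits one from~$\Sh(X)$. The reverse implication cannot be obtained the same way: the converse half of the theorem needs Zorn's lemma in the ambient metatheory, and~$\Sh(X)$ does not in general validate Zorn's lemma internally. I therefore route it through the global statements, proving~(1)~$\Rightarrow$~(4) directly. For an open~$W \subseteq X$, internal modules over~$I(Y)$ at stage~$W$ are the same as sheaves of modules on~$f^{-1}(W) \subseteq Y$, and the restriction of the globally injective module~$I$ along the open immersion~$j : f^{-1}(W) \hookrightarrow Y$ is again injective, because~$j_!$ is left adjoint to~$j^*$ and preserves monomorphisms (the module analogue of the~$p_!$ argument in Proposition~\ref{prop:notions-of-internal-injectivity}). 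Hence any partial map into~$I$ extends over~$W$ itself, so the internal external-injectivity condition~(4) holds---in fact even without passing to a cover.

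Closing the loop, the implications~(1)~$\Rightarrow$~(4)~$\Rightarrow$~(3)~$\Leftrightarrow$~(2)~$\Rightarrow$~(1) establish all four equivalences. The main obstacle is exactly the Zorn-availability asymmetry noted above: because the hard direction of the external/internal comparison is not constructively valid, it may only be invoked in the global metatheory, and the real content of the proof is that this single global application suffices to service the internalized statement~(4) as well, by way of the elementary fact that injectives restrict to injectives along open immersions.
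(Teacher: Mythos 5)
Your proposal is correct, and for two of the three comparisons it coincides with the paper's proof: (1)$\Leftrightarrow$(2) is the module analogue of Theorem~\ref{thm:injectivity-external-internal} applied to the localic topos~$\Sh(Y)$, and (2)$\Leftrightarrow$(3) is the idempotency of the stack semantics. The divergence is at (3)$\Leftrightarrow$(4), and here the premise motivating your detour is false: the validity of Zorn's lemma \emph{does} pass from the metatheory to every localic topos. Unlike the full axiom of choice, whose internal validity would force the law of excluded middle, Zorn's lemma is intuitionistically neutral, and this inheritance is precisely what the paper invokes, citing \cite[Proposition~D4.5.14]{johnstone:elephant}, in order to interpret Theorem~\ref{thm:injectivity-external-internal} internally to~$\Sh(X)$ and obtain (3)$\Leftrightarrow$(4) in one stroke.

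That said, your replacement argument is sound, so the error lies only in the justification for taking the detour, not in the detour itself. Restriction along an open immersion~$j : f^{-1}(W) \hookrightarrow Y$ has the exact left adjoint~$j_!$ (extension by zero), hence preserves injectivity of sheaves of modules; therefore an injective~$I$ satisfies the stage-$W$ instance of~(4) for every open~$W \subseteq X$, even without passing to covers, which gives (1)$\Rightarrow$(4). Combined with the intuitionistically valid direction (4)$\Rightarrow$(3) interpreted in~$\Sh(X)$, the idempotency equivalence (3)$\Leftrightarrow$(2), and the Zorn-powered (2)$\Rightarrow$(1), this closes a directed cycle through all four statements. What your route buys is that Zorn's lemma is spent exactly once, metatheoretically, with no appeal to any internal form of it; what the paper's route buys is brevity and a genuinely two-sided internal comparison of (3) and~(4). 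If you keep your route, delete the incorrect claim that~$\Sh(X)$ need not validate Zorn's lemma internally and replace it with the honest statement that your argument simply does not need that fact.
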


\begin{proof}The equivalence of the first two statements is by
Theorem~\ref{thm:injectivity-external-internal}. The equivalence~$\text{(2)}
\Leftrightarrow \text{(3)}$ is by the idempotency of the stack semantics:
$\Sh(Y) \models \varphi$ if and only if~$\Sh(X) \models (\Sh(I(Y)) \models
\varphi)$. (Shulman stated and proved a restricted version of this idempotency property
in his original paper on the stack semantics~\cite[Lemma~7.20]{shulman:stack-semantics}.
A proof of the general case is slightly less
accessible~\cite[Lemma~1.20]{blechschmidt:master}.) The equivalence~$\text{(3)}
\Leftrightarrow \text{(4)}$ is by interpreting
Theorem~\ref{thm:injectivity-external-internal} internally to~$\Sh(X)$. This
requires Zorn's lemma to hold internally to~$\Sh(X)$; this is indeed the case
since we assume Zorn's lemma in the metatheory and since the validity of Zorn's
lemma passes from the metatheory to localic
toposes~\cite[Proposition~D4.5.14]{johnstone:elephant}.
\end{proof}

\begin{thm}\label{thm:higher-direct-images-as-internal-sheaf-cohomology}
Let~$f : Y \to X$ be a ringed locale over a locale~$X$. Let~$E$ be a sheaf of modules over~$Y$.
Assuming the axiom of choice in the metatheory, the expression~``$H^n(I(Y), E)$''
of the internal language of~$\Sh(X)$ denotes a globally-defined sheaf
over~$X$, and this sheaf coincides with~$R^n f_*(E)$.\end{thm}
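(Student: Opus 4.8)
The plan is to turn the internal construction of $H^n(I(Y), E)$ into an external one by means of Lemma~\ref{lemma:notions-of-internal-injectivity}, which identifies the internally injective modules on $I(Y)$ (from the point of view of $\Sh(X)$) with the injective sheaves of modules over $Y$. Since the axiom of choice in the metatheory provides enough injective sheaves of modules over $Y$, I would first fix a single external injective resolution $0 \to E \to I^\bullet$. By Lemma~\ref{lemma:notions-of-internal-injectivity}, each $I^k$ is internally injective, hence from the point of view of $\Sh(X)$ an injective module on $I(Y)$; and exactness of $0 \to E \to I^\bullet$ as a sequence of sheaves over $Y$ is literally internal exactness on $I(Y)$. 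Thus, interpreted in $\Sh(X)$, the complex $0 \to E \to I^\bullet$ is an injective resolution of $E$ on $I(Y)$, and crucially it is available globally rather than only over the members of a cover.

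Next I would record that the internal global-sections functor $\Gamma(I(Y), -)$, which is the pushforward along the internal morphism $I(Y) \to \mathrm{pt}$ corresponding to $f$, agrees under the equivalence $\Sh(Y) \simeq \Sh(I(Y))$ with the external pushforward $f_*$: for a sheaf $\mathcal{F}$ over $Y$ one has $\Gamma(I(Y), \mathcal{F})(U) = \mathcal{F}(f^{-1}(U)) = (f_*\mathcal{F})(U)$. Applying this degreewise, the internal complex $\Gamma(I(Y), I^\bullet)$ is the external complex $f_*(I^\bullet)$, so that its $n$-th cohomology, computed internally, is by definition the global sheaf $R^n f_*(E)$. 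In particular, when $H^n(I(Y), E)$ is evaluated using the resolution $I^\bullet$, it yields $R^n f_*(E)$.

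The hard part will be to justify that the stack-semantic expression ``$H^n(I(Y), E)$'' denotes this one global sheaf rather than merely a family of sheaves over the members of some open cover, as the generic behaviour of the stack semantics for constructions depending on choices would suggest. The construction of sheaf cohomology invokes the unbounded existential ``there is an injective resolution'', which the stack semantics discharges by passing to a cover, so a priori it produces only local data. The remedy is the well-definedness of sheaf cohomology: any two injective resolutions of $E$ are connected by a morphism that is unique up to homotopy and induces an isomorphism on cohomology. I would carry out this homotopy argument intuitionistically so that it interprets in $\Sh(X)$; doing so furnishes, for any cover over whose members local resolutions have been chosen, canonical comparison isomorphisms on overlaps satisfying the cocycle condition, whence the locally-computed cohomology sheaves glue to a single global sheaf. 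Because the global resolution $I^\bullet$ is already at hand, the trivial cover suffices and the glued sheaf is exactly the $R^n f_*(E)$ of the previous paragraph. Throughout I would use that the axiom of choice in the metatheory descends to the internal validity of Zorn's lemma in the localic topos $\Sh(X)$, as in the proof of Lemma~\ref{lemma:notions-of-internal-injectivity}, which is what licenses applying Lemma~\ref{lemma:notions-of-internal-injectivity} internally and guarantees enough injectives on $I(Y)$ from the internal vantage point.
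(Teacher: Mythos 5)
Your proposal is correct and matches the paper's own argument in all essentials: both use Lemma~\ref{lemma:notions-of-internal-injectivity} together with the external existence of injective resolutions to make the internal construction available, both handle the ``globally-defined'' issue by the uniqueness-up-to-homotopy of comparison morphisms between resolutions (giving cocycle-compatible isomorphisms that glue the locally computed cohomology sheaves), and both identify the result with~$R^n f_*(E)$ by feeding the particular external injective resolution defining~$R^n f_*(E)$ into the internal construction. The only difference is presentational: you fix the global external resolution at the outset, while the paper first unwinds the stack-semantic interpretation into local data over a cover and glues, then makes the same identification.
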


\begin{proof}By Lemma~\ref{lemma:notions-of-internal-injectivity} and by the
fact that every sheaf of modules over~$Y$ admits an injective resolution, every sheaf
of modules over~$I(Y)$ admits an injective resolution from the point of view
of~$\Sh(X)$. Hence we can, internally to~$\Sh(X)$, carry out the construction of~$H^n(I(Y), E)$.
Externally, this yields an open covering of~$X$ such that we have, for each
member~$U$ of that covering
\begin{itemize}
\item a sheaf~$M$ over~$U$,
\item a module structure on~$M$,
\item a resolution~$0 \to E|_{f^{-1}U} \to I^\bullet$ by sheaves of modules which are
internally and hence externally injective and
\item data exhibiting~$M$ as the~$n$-th cohomology
of~$(f|_{f^{-1}U})_*(I^\bullet)$.
\end{itemize}
On intersections of such opens~$U$ and~$U'$, there is exactly one
isomorphism~$M|_{U \cap U'} \to M'|_{U \cap U'}$ of sheaves of modules induced by
a morphism of resolutions which is compatible with the identity on~$E$.
Hence the cocycle condition for these isomorphisms is satisfied, ensuring that
the individual sheaves~$M$ glue to a globally-defined sheaf of modules on~$X$.
(The individual injective resolutions need not glue to a global injective
resolution.)

The claim that this sheaf coincides with~$R^n f_*(E)$ follows from the fact
that we can pick as internal resolution of~$E$ (considered as a sheaf
over~$I(Y)$) the particular injective resolution of~$E$ (considered as a sheaf
over~$Y$) used to define~$R^n f_*(E)$.
\end{proof}

The internal characterization provided by
Theorem~\ref{thm:higher-direct-images-as-internal-sheaf-cohomology} gives, as a
simple application, a logical explanation that higher direct images along
the identity~$\mathrm{id} : X \to X$ vanish: From the internal point of view
of~$\Sh(X)$, the over-locale~$X$ corresponds to the one-point locale, and the
higher cohomology of the one-point locale vanishes.

In algebraic geometry, the internal characterization can be used to immediately
deduce the explicit description of the higher direct images of Serre's twisting
sheaves along the projection~$\mathbb{P}^n_S \to S$, where~$S$ is an arbitrary
base scheme (or even base locally ringed locale), from a computation of the
cohomology of projective~$n$-space. Background on carrying out scheme
theory internally to a topos is given in~\cite[Section~12]{blechschmidt:phd}.

\section{Flabby objects in the \effective topos}
\label{sect:in-eff}

The notion of flabby objects originates from the notion of flabby sheaves and is
therefore closely connected to Grothendieck toposes. Hence it is instructive
to study flabby objects in elementary toposes which are not Grothendieck
toposes, away from their original conceptual home. We begin this study with
establishing the following observations on flabby objects in the \effective
topos. We follow the terminology of Martin Hyland's survey on the \effective
topos~\cite{hyland:effective-topos}.

\begin{prop}\label{prop:flabby-effective-sets}
Let~$X$ be a flabby object in the \effective topos. Let~$f : X \to X$
be a morphism. If~$X$ is \effective, the statement ``$f$ has a fixed point''
holds in the \effective topos.
\end{prop}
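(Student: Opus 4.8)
The plan is to descend to the realizability presentation of $\Eff$ and to exhibit, by hand, a single realizer for the internal statement $\exists x \in X\_ f(x) = x$; the global-element tool of Proposition~\ref{prop:global-elements} is unavailable here because $\Eff$ is not localic. First I would unwind the hypotheses at the level of realizers. Since $X$ is \effective, I may present it by an assembly whose elements carry natural-number codes and whose equality is the literal equality of the underlying set, and $f$ is then tracked by a partial recursive function $\varphi$. Producing a fixed point thus amounts to producing a code $n$ realizing some element $x$ together with a certificate that $\varphi(n)$ again realizes $x$, that is, that $f(x) = x$ holds on the nose.

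Next I would extract the computational content of flabbiness. Interpreting the defining condition of a flabby object in the realizability model yields a uniform \emph{completion operator}: a partial recursive $\sigma$ that, given the realizer data of any subterminal $K \subseteq X$ (a point of $\P_{\leq 1}(X)$ together with a witness that any two elements of $K$ coincide), returns a code for an element $x_K \in X$ with $K \subseteq \{x_K\}$. The decisive feature is that $\sigma$ is defined even when $K$ is empty; this surplus over mere inhabitation is exactly what an object such as the numerals $\NN$ lacks, consistently with the fact that the successor map admits no fixed point and hence that $\NN$ cannot be flabby.

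The heart of the argument is a self-referential construction mediated by Kleene's recursion theorem. The idea is to route the prospective fixed point through $f$ into a partial element and then complete it. I would use the recursion theorem to produce a code $e$ that computes $\sigma$ applied to the subterminal $K_e \subseteq X$ whose single potential value is $f(x)$, guarded by the convergence of the self-application encoded by $e$; here $x$ is the element that $e$ is simultaneously being defined to realize. By construction $K_e$ is subterminal -- it contains at most the value $f(x)$ -- so flabbiness completes it to an element $x \defeq \sigma(K_e)$, and the recursion theorem is what makes this circular stipulation legitimate. Once $e$ genuinely realizes an element, the guard is satisfied, so $K_e = \{f(x)\}$ is inhabited; the inclusion $K_e \subseteq \{x\}$ then forces $f(x) = x$, and the pair consisting of $e$ and the attendant equality certificate realizes ``$f$ has a fixed point''.

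I expect the main obstacle to be precisely the passage in the third step from the \emph{program-level} self-reference delivered by the recursion theorem to an \emph{element-level} equality in $X$. One must set up $K_e$ so that both its underlying partial element and the witness of its subterminality are uniformly computable from $e$, so that $\sigma$ genuinely applies, while keeping careful track of which subcomputations are permitted to diverge; it is here that the totality of $\sigma$ on the empty partial element -- the genuine content of flabbiness -- is indispensable, since it is what allows the recursion to bottom out at an honest element rather than at a divergent computation. Effectiveness of $X$ is what renders every operation in sight recursive, so that the recursion theorem is applicable and the literal equality $f(x) = x$ can be read off from $K_e \subseteq \{x\}$.
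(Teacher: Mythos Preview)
Your proposal is correct and follows the same route as the paper's proof: build a self-referential subterminal $K_e \approx \{f(x_{K_e})\}$ via Kleene's recursion theorem, feed it to the flabbiness realizer to extract $x_{K_e}$, and read off $f(x_{K_e}) = x_{K_e}$ from the inclusion $K_e \subseteq \{x_{K_e}\}$. The paper pinpoints the role of effectiveness a bit more sharply than you do---it is needed precisely to realize that $K_e$ lands in $\P_{\leq 1}(X)$ rather than merely in $\P(X)$, since the unique $x_0$ witnessed by the output of $e$ must be well-determined---and also observes that the reflexivity realizer $a_e$ for $K_e$ in $\P_{\leq 1}(X)$ is in fact independent of $e$, so the recursion theorem can ultimately be dispensed with.
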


\begin{prop}\label{prop:semienough-flabby-modules}
Assuming the law of excluded middle in the metatheory, any~$\neg\neg$-separated
module in the \effective topos embeds into a flabby module.
\end{prop}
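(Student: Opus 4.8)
The plan is to embed $M$ into its $\neg\neg$-sheafification and to check that this sheafification is a flabby module. Recall that in the \effective topos the $\neg\neg$-sheaves are exactly the codiscrete objects, so that the subtopos of $\neg\neg$-sheaves is a copy of~$\Set$ included by the codiscrete-objects functor~$\nabla : \Set \to \Eff$ (the right adjoint of the global-sections functor~$\Gamma : \Eff \to \Set$); concretely, the associated-sheaf functor for the $\neg\neg$-topology is~$\nabla\Gamma$, and $\neg\neg$-separated objects are precisely those~$X$ for which the unit~$X \to \nabla\Gamma X$ is monic~\cite{hyland:effective-topos}. Applied to our $\neg\neg$-separated module~$M$, this gives a monomorphism~$\eta_M : M \to \nabla\Gamma M$. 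As right adjoints, $\Gamma$ and~$\nabla$ preserve finite products, so $\nabla\Gamma$ sends the ring~$R$ to a ring~$\nabla\Gamma R$ and the module~$M$ to a~$\nabla\Gamma R$-module~$\nabla\Gamma M$; restricting scalars along the unit ring homomorphism~$R \to \nabla\Gamma R$ then makes~$\nabla\Gamma M$ an~$R$-module and~$\eta_M$ an~$R$-linear monomorphism. It thus remains to show that~$\nabla\Gamma M$ is flabby as an object of~$\Eff$.

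For this I would show that~$\nabla$ preserves flabby objects. The functor~$\nabla$ is the direct image of the geometric morphism~$g : \Set \to \Eff$ whose inverse image is the left-exact functor~$\Gamma$, so by Proposition~\ref{prop:pushforward-of-flabby-objects} it suffices to verify that~$\nabla$ preserves epimorphisms. This is the step where the concrete structure of the \effective topos is used: since a codiscrete object~$\nabla T$ has a full existence predicate and trivial realizers for equality, a set-theoretic surjection~$S \to T$ is carried to a realizably surjective map~$\nabla S \to \nabla T$, with a trivial realizer witnessing surjectivity. Granting this, $\nabla$ preserves flabby objects. Now~$\Gamma M$ is a module in~$\Set$, hence inhabited (it contains~$0$), hence flabby in~$\Set$ by the law of excluded middle in the metatheory; therefore~$\nabla\Gamma M = \nabla(\Gamma M)$ is flabby in~$\Eff$, and~$M$ embeds into the flabby module~$\nabla\Gamma M$ as required.

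The hard part will be the realizability verification that~$\nabla$ preserves epimorphisms, equivalently that each codiscrete object on an inhabited set is flabby. I expect this to be routine precisely because realizability over codiscrete objects collapses to the classical situation: one should be able to exhibit a single uniform realizer witnessing flabbiness of every~$\nabla S$ at once, the existential witness being chosen in the metatheory (using the law of excluded middle and subterminality of the given subobject) while the realizer itself stays constant. A secondary point to pin down is the pair of structural facts invoked at the outset, that the $\neg\neg$-sheaves of~$\Eff$ are the codiscrete objects and that their reflector is~$\nabla\Gamma$; these are standard but I would cite them explicitly.
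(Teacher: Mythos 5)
Your proposal is correct and is essentially the paper's own proof: the paper also embeds $M$ via the unit $M \to \Delta(\Gamma(M))$ into the codiscrete ($\neg\neg$-sheaf) reflection, notes that $\Gamma(M)$ is flabby by virtue of being inhabited (using excluded middle), and invokes Proposition~\ref{prop:pushforward-of-flabby-objects} to see that the direct image functor preserves flabbiness. The only difference is that you spell out two details the paper leaves implicit -- the transport of the module structure along $\nabla\Gamma$ and the realizability check that $\nabla$ preserves epimorphisms -- both of which are correct and routine.
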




The intuitive
reason for why Proposition~\ref{prop:flabby-effective-sets} holds is the
following. Let~$X$ be a flabby object in the \effective topos. Then there is a
procedure which computes for any subterminal~$K \subseteq X$ an element~$x_K$
such that~$K \subseteq \{ x_K \}$. This element might not depend extensionally
on~$K$, but this fine point is not important for this discussion. Let~$f : X \to X$
be a morphism. We construct the self-referential subset~$K
\defeq \{ f(x_K) \}$; the formal proof below will indicate how this can be
done. Then~$K \subseteq \{ x_K \}$, so~$f(x_K) = x_K$.

A corollary of Proposition~\ref{prop:flabby-effective-sets} is that the trivial
module is the only flabby module in the \effective topos whose underlying
unstructured object is an \effective set: Given such a flabby module~$M$, let~$v
\in M$ be an arbitrary element. Then the morphism~$x \mapsto v + x$ has a fixed
point; thus~$v + x = x$ for some element~$x$, and hence~$v = 0$.

It is the self-referentiality which makes the proof of
Proposition~\ref{prop:flabby-effective-sets} work, but the blame for paucity
of flabby objects in the \effective topos is to put on the realizers for
statements of the form~``$K = K$'', where~$({=})$ is the nonstandard equality
predicate of the powerobject~$\P(X)$. A procedure witnessing flabbiness has to
compute a reflexivity realizer for a suitable element~$x_K$ from a reflexivity
realizer for a given element~$K$. However, such realizers are not very
informative. Metaphorically speaking, a procedure witnessing flabbiness has to
conjure elements out of thin air.

This problem does not manifest with objects~$X$ which are not \effective sets.
Reflexivity realizers for these objects are themselves not very informative;
a procedure witnessing flabbiness therefore only has to turn one kind of
non-informative realizers into another kind. The flabby modules featuring in
the proof of Proposition~\ref{prop:semienough-flabby-modules} will accordingly
not be \effective sets.

\begin{proof}[Proof of Proposition~\ref{prop:flabby-effective-sets}]
For any Turing machine~$e$, let~$v_e : |X| \to \Sigma$ be the nonstandard
predicate given by
\begin{multline*}
  v_e(x) = \{ m \in \NN \,|\,
  \text{there is an element~$x_0 \in |X|$ such that} \\
  \text{$e$ terminates with an element of~$\llbracket x_0 = x_0 \rrbracket$ and
  $m \in \llbracket x = x_0 \rrbracket$} \}
\end{multline*}
and let~$K_e \in \Sigma^{|X| \times \Sigma}$ be the nonstandard predicate given by
\[ K_e(x,u) = \llbracket (x = x) \wedge (u \leftrightarrow v_e(x)) \rrbracket. \]
One can explicitly construct a realizer~$a_e$ of the statement~``$K_e = K_e$'',
where~$({=})$ is the nonstandard equality predicate of the object~$\P_{\leq1}(X)$
of subterminals of~$X$. This is where the assumption that~$X$ is \effective is
important; without it, we could only verify~``$K_e = K_e$'' where~$({=})$ is
the nonstandard equality predicate of the full powerobject~$\P(X)$.

Since~$X$ is flabby, there is a realizer~$r$ for the statement~``$\forall K \in
\P_{\leq1}(X)\_ \exists x \in X\_ \forall y \in X\_ (y \in K \Rightarrow y =
x)$''. Let~$s$ be a realizer for the statement~``$\forall x \in X\_ \exists y
\in Y\_ y = f(x)$''. Let~$e$ be the particular Turing machine which proceeds as
follows:
\begin{enumerate}
\item[1.] Simulate~$r$ on input~$a_e$ in order to obtain a realizer~$b \in
\llbracket x = x \rrbracket$ for some~$x \in |X|$.
\item[2.] Simulate~$s$ on input~$b$ in order to obtain a realizer~$c \in \llbracket
f(x) = f(x) \rrbracket$.
\item[3.] Output~$c$.
\end{enumerate}
This description of the machine~$e$ makes use of the number~$e$ coding it;
the recursion theorem yields a general reason why this self-referentiality is
possible. Here we can even do without this theorem, since a close inspection of
the construction of~$a_e$ shows that~$a_e$ is actually independent of~$e$. This
should not come as a surprise, as reflexivity realizers of~$\P(X)$
and~$\P_{\leq1}(X)$ are known to be not very informative.

Passing~$a_e$ to~$r$ yields a reflexivity realizer of some element~$x_{K_e} \in
|X|$. Therefore the Turing machine~$e$ does terminate, with a reflexivity
realizer for~$f(x_{K_e})$. Thus the statement~``$f(x_{K_e}) \in K_e$'' is
realized; hence~``$f(x_{K_e}) = x_{K_e}$'' is as well.
\end{proof}

\begin{proof}[Proof of Proposition~\ref{prop:semienough-flabby-modules}]
Let~$(\Gamma \dashv \Delta) : \Set \to \Eff$ be the inclusion of the
double-negation sheaves. For a~$\neg\neg$-separated module~$M$ in the \effective
topos, the canonical morphism~$M \to \Delta(\Gamma(M))$ is a monomorphism; the
set~$\Gamma(M)$ is flabby by virtue of being inhabited; and~$\Delta$ preserves
flabby objects by Proposition~\ref{prop:pushforward-of-flabby-objects}.
\end{proof}

\begin{rem}The analogues of Proposition~\ref{prop:flabby-effective-sets} and
Proposition~\ref{prop:semienough-flabby-modules} for the realizability topos
constructed using infinite time Turing
machines~\cite{bauer:injection,hamkins-lewis:ittm} are true as well, with the
same proofs.\end{rem}

\section{Conclusion}
\label{sect:conclusion}

We originally set out to develop an intuitionistic account of Grothendieck's
sheaf cohomology. Čech methods can be carried out constructively, and
there are constructive accounts of special cases, resulting even in
efficient-in-practice algorithms~\cite{barakat-lh:homalg,barakat-lh:ext},
but it appears that there is not a general framework for sheaf
cohomology which would work in an intuitionistic metatheory.

The main obstacle preventing Grothendieck's theory of derived functors to be interpreted
constructively is its reliance on injective resolutions. It is known that in
the absence of the axiom of choice, much less in a purely intuitionistic
context, there might not be any nontrivial injective abelian
group~\cite{blass:inj-proj-axc}.

In principle, this problem could be remedied by employing flabby resolutions
instead of injective ones. There are, however, two problems with this
suggestion. Firstly, all proofs known to us that flabby sheaves are
acyclic for the global sections functor require Zorn's lemma. This problem might
be mitigated by relying on the substitute property discussed
following Scholium~\ref{scholium:exact-as-presheaves}.
But secondly, it is an open question whether
one can show, purely intuitionistically, that any sheaf of modules embeds into
a flabby sheaf of modules. The following is known about this problem:

\begin{enumerate}
\item There is a purely intuitionistic proof that any sheaf of sets embeds into
a flabby sheaf of sets
(Scholium~\ref{scholium:properties-of-flabby-objects}(4)).

\item The existence of enough flabby modules, and even the existence of enough
injective modules, is \emph{not} a constructive taboo, that is, these statements do not
entail a classical principle like the law of excluded middle or the principle
of omniscience. This is because assuming the axiom of choice, any
Grothendieck topos has enough injective (and therefore flabby) modules. 


\item There is a way of embedding any module into a flabby module if
\emph{quotient inductive types}, as suggested by Altenkirch and
Kaposi~\cite{altenkirch-kaposi:qits}, are available. These generalize ordinary
inductive~$W$-types, which exist in any
topos~\cite{moerdijk-palmgren:wellfounded-trees,berg-moerdijk:w-types-in-sheaves,berg-kouwenhoven-gentil:w-types-in-eff}
and whose existence
can indeed be verified in an intuitionistic set theory like IZF, by allowing to
give constructors and state identifications at the same time. More
specifically, given an~$R$-module~$M$, we can construct a flabby envelope~$T$
of~$M$ as the quotient inductive type generated by the following clauses:~$0
\in T$ (where~$0$ is a formal symbol); if~$t,s \in T$, then~$t + s \in T$;
if~$t \in T$ and~$r \in R$, then~$rt \in T$; if~$x \in M$, then~$\underline{x}
\in T$; if~$K \subseteq T$ is a subterminal, then~$\varepsilon_K \in T$;
if~$t,s,u \in T$ and~$r,r' \in R$, then~$t + (s + u) = (t + s) + u$, $t + s = s + t$, $t + 0 = t
= t + 0$, $0t = 0$, $1t = t$, $r(t+u) = rt + ru$, $(r+r')t = rt + r't$; if~$x,y
\in M$ and~$r \in R$, then~$\underline{0} = 0$, $\underline{x + y} =
\underline{x} + \underline{y}$, $\underline{rx} = r \underline{x}$; and if~$t
\in T$, then~$\varepsilon_{\{t\}} = t$.

However, it is an open question under which circumstances quotient inductive
types can be shown to exist. Zermelo--Fraenkel with choice certainly suffices,
while Zermelo--Fraenkel without choice does not~\cite[Section~9]{shulman-lumsdaine:hits},
hence IZF also does not.\footnote{With quotient inductive types, any infinitary
algebraic theory admits free algebras. However, it is consistent with
Zermelo--Fraenkel set theory that some such theories do not admit free
algebras~\cite{blass:free-algebras}.} The existence of quotient inductive types
seem to be, as the existence of enough injective modules, \emph{constructively
neutral}.

\item There are a number of simple constructions which come close to providing
flabby envelopes for arbitrary modules~$M$. For instance, we could equip the set~$T
\defeq \P_{\leq1}(X)/{\sim}$, where~$K \sim L$ if and only if~$K = L$ or~$K
\cup L \subseteq \{0\}$, with a module structure given by~$0 \defeq [\{0\}]$,
$[K]+[L] \defeq [K+L]$ and~$r [K] \defeq rK$. The resulting module admits a
linear injection from~$M$, sending an element~$x$ to~$[\{x\}]$. However, it
fails to be flabby. Given a subterminal~$E \subseteq \P_{\leq1}(X)/{\sim}$,
there is the well-defined element~$v \defeq [\{x \in M\,|\,\text{$x \in K$ for
some~$[K] \in E$}\}]$, but we cannot verify~$E \subseteq \{v\}$.

\item There appears to be some tension regarding the \effective topos:
Proposition~\ref{prop:semienough-flabby-modules} shows that at
least~$\neg\neg$-separated modules in the \effective topos always embed into
flabby modules, assuming the law of excluded middle in the metatheory, while
Proposition~\ref{prop:flabby-effective-sets} shows that no nontrivial \effective
module is flabby.
\end{enumerate}

We currently believe that it is not possible to give a constructive account of
a global cohomology functor which would associate to any sheaf of modules its
cohomology. However, it should be possible to do so for a restricted class of
sheaves, while still preserving the good formal properties expected from
derived functors.

\printbibliography
\enlargethispage{1em}

\end{document}